\documentclass[11pt, oneside]{amsart}

\usepackage{amsmath, amsthm, amssymb, array, amsfonts, booktabs, wasysym, verbatim, bbm, color, graphics, geometry, physics, hyperref, tikz-cd,  mathrsfs, tensor, tabularx, float, mathtools, enumitem, xltabular}

\usepackage[utf8]{inputenc}
\usepackage{amsmath}
\usepackage[
backend=biber,
style=alphabetic,
sorting=nyt
]{biblatex}
\DeclareFontFamily{U}{min}{}
\DeclareFontShape{U}{min}{m}{n}{<-> udmj30}{}

\newsavebox{\pullback}
\sbox\pullback{%
\begin{tikzpicture}%
\draw (0,0) -- (1ex,0ex);%
\draw (1ex,0ex) -- (1ex,1ex);%
\end{tikzpicture}}

\DeclareMathOperator{\GL}{GL}
\DeclareMathOperator{\BGL}{BGL}

\newcommand{\Q}{\mathbb{Q}}

\newcommand{\Pp}{\mathbb{P}}

\newcommand{\Spec}{\operatorname{Spec}}

\newcommand{\codim}{\mathrm{codim}\,}

\newtheorem{thm}{Theorem}[section]
\newtheorem{prop}[thm]{Proposition}
\newtheorem{conv}[thm]{Convention}
\newtheorem{notn}[thm]{Notation}
\newtheorem{lem}[thm]{Lemma}
\newtheorem{cor}[thm]{Corollary}

\theoremstyle{definition}

\newtheorem{ex}[thm]{Example}
\newtheorem{rem}[thm]{Remark}

\title[Universal Formulas for de Jonqui\`eres Loci and Equivariant Contact Problems]{Universal Formulas for de Jonqui\`eres Loci\\ and Equivariant Contact Problems}
\author{Smita Rajan}
\date{\today}

\begin{document}

\maketitle
\begin{abstract} 
Given a family of genus $g$ curves $f:\Psi\to S$ and a relative degree $d$ line bundle $\mathcal{L}$ on $\Psi$ such that $f_*\mathcal{L}$ is a vector bundle, the projectivization $\Pp(f_*\mathcal{L})$ is stratified by \textit{relative de Jonqui\`eres loci}, which parameterize sections of $\mathcal{L}$ with prescribed vanishing orders. We give universal formulas for the classes of these loci in terms of the relative $\mathcal{O}(1)$ on $\Pp(f_*\mathcal{L})$ and tautological classes pulled back from the universal Picard stack $\mathrm{Pic}_g^d$. When $f$ is the family of lines in $\Pp^r$, we use the universal polynomials to compute $\GL_{r+1}$-equivariant classes of loci of hypersurfaces and complete intersections admitting lines with prescribed contact orders.
\end{abstract}
\section{Introduction}
Given a smooth curve $C$ of genus $g$, and a linear series $(\mathcal{L},V)$ of degree $d$ and dimension $r$, one can compute the locus of all divisors $D=\sum_{i=1}^m \mu_ip_i$ such that  $\mathcal{L}\cong \mathcal{O}_C(D).$  Such divisors are called \textit{de Jonqui\`eres divisors of length $m$}. Embedding $C$ into $\Pp^{r}$ via $\mathcal{L}$, one sees that the locus of de Jonqui\`eres divisors of length $m$ is equivalent to the locus of hyperplanes intersecting $C$ at $m$ points with multiplicities $\mu=(\mu_1,\dots,\mu_m)$. In the case where the number of such divisors is finite, this classical question has a beautiful answer given by the de Jonqui\`eres formula \cite{arbarello1985geometry}: 
Let $m_a$ be the multiplicity of $a$ in the partition $\mu$. The number of de Jonqui\`eres divisors of length $m$ is the coefficient of the monomial $t_1\cdots t_m$ in 
\[\frac{1}{\prod_{a}m_a!}(1+\mu_1^2t_1+\dots+\mu_m^2t_m)^g(1+\mu_1t_1+\dots+\mu_mt_m)^{d-r-g}.\]
This paper studies the generalization of this problem to the case where $(C,\mathcal{L})$ varies in families. Suppose we have a family of  genus $g$ curves $f:\Psi\to S$ and a relative degree $d$ line bundle $\mathcal{L}$ on $\Psi$, such that $f_*\mathcal{L}$ is a vector bundle. For $\mu=(\mu_1,\dots,\mu_m)$ a partition of $d$, define 
the locally closed subsets
\[X_\mu^\circ=\left\{(C,s):\mathrm{div}(s)= \sum_{i=1}^m \mu_ip_i\right\}\subseteq \Pp(f_*\mathcal{L}),\]
which give rise to a natural stratification of $\Pp(f_*\mathcal{L})$. 
We call their closures $X_{\mu}=\overline{X_{\mu}^\circ}$ the \textit{relative de Jonqui\`eres loci of type $\mu$}.

Many well-studied loci are examples of de Jonqui\`eres loci. For example, when $f:\mathcal{C}\to \mathcal{M}_g$ is the universal curve and $\mathcal{L}=\omega_{f}^{\otimes k}$, the relative de Jonqui\`eres loci are (projectivized, unordered) strata of $k$-differentials, which stratify the $k$-Hodge bundle, $\Pp(f_*\omega_{f}^{\otimes k})=\Pp(\mathcal{H}^k).$ The fundamental classes of strata of $k$-differentials were computed in \cite{ChenCycle}, \cite{Sauvaget2019}, and also over the moduli space of curves with rational tails in \cite{GTIncidence}.
Additionally, when $f:\Pp\mathcal{W}\to \mathrm{BGL}_2$ is the projectivization of the universal rank $2$ vector bundle and $\mathcal{L}=\mathcal{O}_{\Pp\mathcal{W}}(d),$ the relative de Jonqui\`eres loci are the $\GL_2$-equivariant coincident root loci of binary forms. Their fundamental classes were computed in \cite{feher2003coincidentrootlocibinary} and $\mathrm{PGL}_2$-equivariantly in \cite{spink2018pgl2equivariantstratapointconfigurations}.
In the case that $S=\Spec k$, when the relative de Jonqui\`eres loci $X_{\mu}$ have codimension $r$ in $\Pp(H^0(C,\mathcal{L}))$, the degree of $X_{\mu}$ is equal to the number of de Jonqui\`eres divisors of length $m$ for a general $r$-dimensional linear subsystem $(\mathcal{L},V).$

In this paper, we provide universal formulas for relative de Jonqui\`eres loci that, under specialization, recover those above. These formulas allow both the curve and the line bundle to vary in moduli. The pair $(f:\Psi\to S,\mathcal{L})$ determines a morphism from $S$ to the universal Picard stack $\mathrm{Pic}_g^d.$ The universal curve $f:\mathcal{C}_g\to \mathrm{Pic}_g^d$ is equipped with the universal degree $d$ line bundle $\mathscr{L}$. From $c_1(\omega_{f})$ and $c_1(\mathscr{L})$, we can define tautological classes in $A^*(\mathrm{Pic}_g^d)$ called the \textit{twisted kappa classes}:
\[\kappa_{i,j}=f_*(c_1(\omega_f)^{i+1}c_1(\mathscr{L})^j).\]
In the following theorem, we give universal formulas for relative de Jonqui\`eres loci.
\begin{notn}
    We write $\mu=(\mu_1,\dots,\mu_m)$ and $\nu=(\nu_1,\dots,\nu_n)$ for tuples of positive integers, and set 
    \[c(\mu)=\sum_{i=1}^m \mu_i, \qquad c(\nu)=\sum_{i=1}^n \nu_i.\] In the setting of Theorems \ref{thm1} and \ref{thm2}, $\mu$ is a partition of $d$, and we use $\nu$ when 
    \[\mu=\left(\nu_1,\dots,\nu_n,1^{d-c(\nu)}\right), \qquad \nu_{i}>1.\]
\end{notn}
\begin{thm}\label{thm1}
    Let $(f:\Psi\to S,\mathcal{L})$ be a pair of a family of genus $g$ curves and a relative degree $d$ line bundle, and let $\alpha$ be the corresponding map $S\to \mathrm{Pic}_g^d.$ Suppose that $f_*\mathcal{L}$ is a vector bundle, and let
    $\gamma=c_1(\mathcal{O}(1))$ on $\Pp(f_*\mathcal{L})$. Suppose that $X_{\mu}$ has codimension $c(\nu)-n$.
    Then, there exist explicit polynomials $f_\nu^i\in A^{i-n}(\mathrm{Pic}_g^d)$ in the twisted kappa classes such that the class of the relative de Jonqui\`eres locus $X_{\mu}$ is
    \[[{X}_\mu]=\frac{1}{\prod_{a} m_a!}\sum_{i=n}^{c(\nu)}\gamma^{c(\nu)-i} g_{\nu}^i,\]
    where $g_{\nu}^i\coloneq \alpha^*f_{\nu}^i,$ and $m_a$ is the multiplicity of $a$ in $\nu.$
\end{thm}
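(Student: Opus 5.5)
We construct a resolution of $X_\mu$ by marking the points of higher multiplicity, and then push forward. Let $\Psi^n_S$ denote the $n$-fold fibered power of $\Psi$ over $S$, with projections $\pi_j:\Psi^n_S\to\Psi$. Consider
\[\widetilde{X}_\nu=\left\{(p_1,\dots,p_n,[s]) : s \text{ vanishes to order at least } \nu_j \text{ at } p_j\right\}\subseteq \Pp(f_*\mathcal{L})\times_S \Psi^n_S .\]
The image of $\widetilde X_\nu$ in $\Pp(f_*\mathcal{L})$ is $X_\mu$. Over $X_\mu^\circ$ the map is finite of degree $\prod_a m_a!$, since points with equal multiplicities may be permuted. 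It is also birational onto its image after quotienting by this symmetry. So
\[[X_\mu]=\frac{1}{\prod_a m_a!}\,p_*[\widetilde X_\nu],\]
provided $\widetilde X_\nu$ has the expected dimension. This is where the hypothesis $\operatorname{codim} X_\mu=c(\nu)-n$ is used. Write $\widetilde\Psi$ for the relevant product and $q$ for its projection to $\Pp(f_*\mathcal{L})$.

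\textbf{Degeneracy locus description.} On $\Psi^n_S$ form the principal-parts bundles $P_j=\pi_j^*P^{\nu_j-1}_f(\mathcal{L})$, each of rank $\nu_j$. They come with jet evaluation maps $f^*f_*\mathcal{L}\to P_j$. Tautologically, $s$ gives a section of $\mathcal{O}(1)\otimes P_j$ on $\Pp(f_*\mathcal{L})\times_S\Psi^n_S$. Then $\widetilde X_\nu$ is the common zero locus of these sections, i.e.\ the zero locus of a section of the rank $c(\nu)$ bundle
\[E=\bigoplus_j P_j\otimes\mathcal{O}(1).\]
This needs care along the diagonals, since coincident points do not impose independent conditions. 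I would handle this in one of two ways. One option is to show that the diagonal contributions lie in $X_{\mu'}$ for coarser $\mu'$ of larger codimension. The cleaner alternative is to observe that the expected-dimension hypothesis makes $\widetilde X_\nu$ have codimension exactly $c(\nu)$ in $\widetilde\Psi$, since $\Psi^n_S$ has relative dimension $n$. Then
\[[\widetilde X_\nu]=c_{c(\nu)}(E)\]
by the standard zero-locus formula. The excess diagonal components have too small dimension to appear, once one checks that the diagonal part of $\widetilde X_\nu$ maps to loci of codimension larger than $c(\nu)-n$.

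\textbf{Computing the pushforward.} Expand each factor using the filtration of $P^{k}_f(\mathcal{L})$, whose graded pieces are $\mathcal{L}\otimes\omega_f^{\otimes i}$ for $0\le i\le k$. This gives
\[c_{\nu_j}(P_j\otimes\mathcal{O}(1))=\prod_{i=0}^{\nu_j-1}\bigl(\gamma+\pi_j^*(c_1(\mathcal{L})+i\,c_1(\omega_f))\bigr).\]
Multiply over $j$ and expand in powers of $\gamma$. The coefficient of $\gamma^{c(\nu)-i}$ is a polynomial of degree $i$ in the classes $\pi_j^*c_1(\mathcal{L})$ and $\pi_j^*c_1(\omega_f)$. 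Pushing forward along $\Psi^n_S\to S$ factors over the $n$ copies, with no diagonal classes appearing, because $E$ is a direct sum of pullbacks from single factors. Each monomial $\prod_j \pi_j^*(c_1(\omega_f)^{a_j}c_1(\mathcal{L})^{b_j})$ therefore pushes forward to
\[\prod_j \alpha^*f_*\bigl(c_1(\omega_f)^{a_j}c_1(\mathscr{L})^{b_j}\bigr)=\prod_j \alpha^*\kappa_{a_j-1,b_j},\]
where $\alpha^*$ is applied using pullback compatibility along $S\to\mathrm{Pic}^d_g$. Each factor lowers degree by one, which gives a class in degree $i-n$. The terms with $i<n$ vanish, since some factor would have degree $0$ and push forward to zero. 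This yields $f^i_\nu\in A^{i-n}(\mathrm{Pic}^d_g)$ and the sum over $n\le i\le c(\nu)$.

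\textbf{Main obstacle.} The delicate step is justifying $[\widetilde X_\nu]=c_{c(\nu)}(E)$ and the degree $\prod m_a!$ near the diagonals and near loci where $f_*\mathcal{L}\to P_j$ fails to be surjective, for instance at nonreduced or special fibers. I would first try to prove that, under the codimension hypothesis, $\widetilde X_\nu$ is generically reduced of pure expected dimension. This can be checked at a general point of $X_\mu^\circ$, where the jets at distinct points are independent conditions. Any excess components then have dimension strictly below expected and do not contribute.
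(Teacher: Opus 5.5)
There is a genuine gap, and it sits exactly at the step you flagged. With $E=\bigoplus_j \pi_j^*P^{\nu_j-1}_f(\mathcal{L})\otimes\mathcal{O}(1)$, the zero locus has excess along the diagonals, and that excess does contribute.

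Over $D_{ij}=\{p_i=p_j\}$, the section of $E$ only asks $s$ to vanish to order $\max(\nu_i,\nu_j)$ at the common point. So that part of the zero locus is cut out in $D_{ij}\times_S\Pp(f_*\mathcal{L})$ by just $c(\nu)-\min(\nu_i,\nu_j)$ equations. It is nonempty whenever $X_\mu$ is. Since $\nu_i,\nu_j\geqslant 2$, it has codimension at most $1+c(\nu)-\min(\nu_i,\nu_j)\leqslant c(\nu)-1$ in the ambient space. It is therefore a component of dimension strictly larger than expected. Your dichotomy is backwards: a zero locus of a section of a rank $c(\nu)$ bundle never has components of codimension exceeding $c(\nu)$, so there are no components too small to appear.

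The image of this component in $\Pp(f_*\mathcal{L})$ is the locus where $\nu_i,\nu_j$ are replaced by a single $\max(\nu_i,\nu_j)$. That locus strictly contains $X_\mu$, so the excess contribution has no reason to push forward to zero. The hypothesis $\codim X_\mu=c(\nu)-n$ says nothing about it.

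Concretely, take plane quartics ($r=2$, $d=4$, $\nu=(2,2)$). Your recipe gives only $f^4_\nu=(\kappa_{-1,2}+\kappa_{0,1})^2$, which pulls back to $(12\sigma_1)^2=144\sigma_{1,1}$ on $\mathrm{Gr}(2,3)$. That means $72$ bitangents instead of $28$. The terms $-4\kappa_{-1,3}-10\kappa_{0,2}-6\kappa_{1,1}$ in the paper's $f^4_\nu$ are exactly the diagonal corrections you are missing. Your degree count fails for the same reason: if $\mathrm{div}(s)=2a+2b+\cdots$, your incidence also contains $(a,a)$ and $(b,b)$.

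The missing idea is to impose $\mathrm{div}(s)\geqslant\sum_i\nu_ip_i$ rather than order $\geqslant\nu_j$ at each $p_j$ separately. The paper does this with the weighted bundle of principal parts
\[\mathcal{F}_\nu(\mathcal{L})=p_*\bigl(p_{n+1}^*\mathcal{L}\otimes\mathcal{O}_{\Psi^{n+1}}/\mathcal{I}_{D_{1,n+1}}^{\nu_1}\cdots\mathcal{I}_{D_{n,n+1}}^{\nu_n}\bigr).\]
This is a rank $c(\nu)$ bundle commuting with base change. It agrees with your $\bigoplus_j P_j$ away from the diagonals, but imposes $\nu_i+\nu_j$ conditions when $p_i=p_j$. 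The incidence locus is then the zero locus of $\mathcal{O}(-1)\to\mathcal{F}_\nu(\mathcal{L})$ with no diagonal excess, and its class is $c_{c(\nu)}(\mathcal{O}(1)\boxtimes\mathcal{F}_\nu(\mathcal{L}))$.

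The filtration with graded pieces $\mathcal{E}_{\nu'}(\mathcal{L})|_{D_{j,n+1}}$ gives
\[c(\mathcal{F}_\nu(\mathcal{L}))=\prod_k\prod_{j=0}^{\nu_k-1}\bigl(1+\zeta_k-\textstyle\sum_{i<k}\nu_i\delta_{ik}+j\psi_k\bigr).\]
The $\delta_{ik}$ terms are precisely what your product formula lacks. Pushing forward monomials containing them, using that a diagonal class restricts to $-\psi$ on the diagonal, produces the $(-1)^{h_1(C)}\kappa$ terms of the paper's graph-theoretic pushforward lemma. You could instead keep $E$ and compute the residual contributions along all diagonal strata, but that is considerably more work.
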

The key ingredient in our construction is a vector bundle $\mathcal{F}_{\nu}(\mathscr{L})$ on $\mathcal{C}_g^n\to \mathrm{Pic}_{g}^d$ which we call the \textit{weighted bundle of principal parts}. The vector bundle $\mathcal{F}_{\nu}(\mathscr{L})$ generalizes relative bundles of principal parts \cite{eisenbud20163264} and secant bundles on symmetric products \cite{Mattuck1965Secant} to the weighted setting. The fiber of $\mathcal{F}_{\nu}(\mathscr{L})$ over a geometric point $(C,\mathcal{L},p_1,\dots,p_n)$ is 
\[H^0(C,\mathcal{L}\otimes \mathcal{O}_{C}/\mathcal{I}_{p_1}^{\nu_1}\dots \mathcal{I}_{p_n}^{\nu_n}).\]
The vector bundle $\mathcal{F}_{\nu}(\mathscr{L})$ has rank $c(\nu)$ and its construction commutes with base change. We use $\mathcal{F}_{\nu}(\mathscr{L})$ to pick up higher-order zeros of sections of $\mathcal{L}$. The advantage of working with $\mathcal{F}_{\nu}(\mathscr{L})$ compared to a direct sum of pullbacks of relative bundles of principal parts is that the diagonals do not have to be considered separately; when the points $p_i,p_j$ collide, the multiplicity at $p_i$ in the fiber is $\nu_i+\nu_j$ rather than $\max(\nu_i,\nu_j).$ 

The bundle $\mathcal{F}_{\nu}(\mathscr{L})$ admits a filtration by line bundles, and its  Chern classes are sums of products of pullbacks of $c_1(\mathscr{L})$ and $c_1(\omega_{f})$ with classes of diagonals $\Delta_{ij}\subseteq \mathcal{C}_g^n,$ allowing us to calculate them explicitly. The $f_{\nu}^i$ in Theorem \ref{thm1} are pushforwards of the Chern classes of $\mathcal{F}_{\nu}(\mathscr{L})$ under $g:\mathcal{C}_g^n\to \mathrm{Pic}_g^d$. 
    \begin{ex}\label{intro example}
    Let $d=6$ and $\mu=\nu=(2,2,2)$. The universal polynomials of Theorem \ref{thm1} are as follows:
\begin{align*}
f_{\nu}^{3}={}&
    8\kappa_{-1,1}^{3}
    +12\kappa_{-1,1}^{2}\kappa_{0,0}
    +6\kappa_{-1,1}\kappa_{0,0}^{2}
    +\kappa_{0,0}^{3}
    -72\kappa_{-1,1}^{2} \\& \quad
    -96\kappa_{-1,1}\kappa_{0,0}
    -30\kappa_{0,0}^{2}
    +160\kappa_{-1,1}
    +176\kappa_{0,0},
\\[0.75em]
f_{\nu}^{4}={}&
    12\kappa_{-1,1}^{2}\kappa_{-1,2}
    +12\kappa_{-1,1}\kappa_{-1,2}\kappa_{0,0}
    +3\kappa_{-1,2}\kappa_{0,0}^{2}
    +12\kappa_{-1,1}^{2}\kappa_{0,1} \\
&\quad
    +12\kappa_{-1,1}\kappa_{0,0}\kappa_{0,1}
    +3\kappa_{0,0}^{2}\kappa_{0,1}
    -108\kappa_{-1,1}\kappa_{-1,2}
    -66\kappa_{-1,2}\kappa_{0,0} \\
&\quad
    -156\kappa_{-1,1}\kappa_{0,1}
    -90\kappa_{0,0}\kappa_{0,1}
    -36\kappa_{-1,1}\kappa_{1,0}
    -18\kappa_{0,0}\kappa_{1,0} \\
&\quad
    +240\kappa_{-1,2}
    +528\kappa_{0,1}
    +256\kappa_{1,0},
\\[0.75em]
f_{\nu}^{5}={}&
    6\kappa_{-1,1}\kappa_{-1,2}^{2}
    +3\kappa_{-1,2}^{2}\kappa_{0,0}
    +12\kappa_{-1,1}\kappa_{-1,2}\kappa_{0,1}
    +6\kappa_{-1,2}\kappa_{0,0}\kappa_{0,1} \\
&\quad
    +6\kappa_{-1,1}\kappa_{0,1}^{2}
    +3\kappa_{0,0}\kappa_{0,1}^{2}
    -36\kappa_{-1,2}^{2}
    -24\kappa_{-1,1}\kappa_{-1,3}
    -12\kappa_{-1,3}\kappa_{0,0} \\
&\quad
    -96\kappa_{-1,2}\kappa_{0,1}
    -60\kappa_{0,1}^{2}
    -60\kappa_{-1,1}\kappa_{0,2}
    -30\kappa_{0,0}\kappa_{0,2}
    -18\kappa_{-1,2}\kappa_{1,0} \\
&\quad
    -18\kappa_{0,1}\kappa_{1,0}
    -36\kappa_{-1,1}\kappa_{1,1}
    -18\kappa_{0,0}\kappa_{1,1}
    +160\kappa_{-1,3}
    +528\kappa_{0,2} \\
&\quad
    +512\kappa_{1,1}
    +120\kappa_{2,0},
\\[0.75em]
f_{\nu}^{6}={}&
    \kappa_{-1,2}^{3}
    +3\kappa_{-1,2}^{2}\kappa_{0,1}
    +3\kappa_{-1,2}\kappa_{0,1}^{2}
    +\kappa_{0,1}^{3}
    -12\kappa_{-1,2}\kappa_{-1,3} \\
&\quad
    -12\kappa_{-1,3}\kappa_{0,1}
    -30\kappa_{-1,2}\kappa_{0,2}
    -30\kappa_{0,1}\kappa_{0,2}
    -18\kappa_{-1,2}\kappa_{1,1}
    -18\kappa_{0,1}\kappa_{1,1} \\
&\quad
    +40\kappa_{-1,4}
    +176\kappa_{0,3}
    +256\kappa_{1,2}
    +120\kappa_{2,1}.
\end{align*}
Taking $f:\mathcal{C}\to \mathcal{M}_g$, and $\mathcal{L}=\omega_{f}$ gives us $\alpha^*\kappa_{i,j}=\kappa_{i+j}.$ The fundamental class of the stratum of differentials for this partition when $k=1$ and $g=4$ in $A^3(\Pp \mathcal{H}_4)$ is
\[
\begin{aligned}
[X_{\mu}]=\frac{1}{6}\Bigl[{}&
\gamma^3\Bigl(
27\kappa_0^3-198\kappa_0^2+336\kappa_0
\Bigr)
+\gamma^2\Bigl(
54\kappa_0^2\kappa_1-474\kappa_0\kappa_1+1024\kappa_1
\Bigr) \\
&\quad
+\gamma\Bigl(
36\kappa_0\kappa_1^2-228\kappa_1^2
-180\kappa_0\kappa_2+1320\kappa_2
\Bigr)
+8\kappa_1^3-120\kappa_1\kappa_2+592\kappa_3\Bigr].
\end{aligned}
\]
Taking $f:\Pp\mathcal{W}\to \mathrm{BGL}_2$ and $\mathcal{L}=\mathcal{O}_{\Pp\mathcal{W}}(d)$ gives us $\alpha^*\kappa_{i,j}=f_*((-2\zeta-c_1)^{i+1}(d\zeta)^j),$ where $\zeta=c_1(\mathcal{O}_{\Pp\mathcal{W}}(1)).$
It follows that the $\GL_2$-equivariant class of the coincident root locus in $A^3(\Pp(\mathrm{Sym}^6 \mathcal{W}^\vee))$ is
\[\begin{aligned}
   [X_{\mu}]=\frac{1}{6}\Bigl(48\gamma^3-432c_1\gamma^2+(1104c_1^2 + 768c_2)\gamma-720c_1^3 - 2304c_1c_2\Bigr).
\end{aligned}\]
These formulas agree with the known formulas in \cite{ChenCycle} and \cite{feher2003coincidentrootlocibinary}.
\end{ex}
In the case that $f:\Pp \mathcal{S}\to \mathrm{Gr}(2,r+1)$ is the family of lines in $\Pp^r$ and $\mathcal{L}=\mathcal{O}_{\Pp \mathcal{S}}(d)$, we use the universal polynomials $f_{\nu}^i$ to study equivariant contact problems. Let $X$ be a degree $d$ hypersurface in $\Pp^r$. 
For $\mu$ a partition of $d$, we define a $\mu$-incident line to be a line meeting $X$ at points with multiplicities prescribed by $\mu'$, for $\mu'=\mu$ or $\mu'$ a partition that is a coarsening of $\mu$. Let $\mathcal{W}$ be the universal rank $r+1$ vector bundle over $\mathrm{BGL}_{r+1}$. We consider the universal $\mu$-incident line $\Sigma_{\mu}$, which is equipped with projection maps to the moduli space of degree $d$ hypersurfaces $\Pp(\mathrm{Sym}^d\mathcal{W}^\vee)$ and $\mathrm{Gr}(2,\mathcal{W}):$
\begin{center}
\begin{tikzcd}[row sep=50]
\Sigma_{\mu} \arrow[r, hook] & {\mathrm{Gr}(2,\mathcal{W})\times_{\BGL_{r+1}}\Pp(\mathrm{Sym}^d\mathcal{W}^\vee)} \arrow[d, "\pi_2"'] \arrow[r, "\pi_1"] & {\Pp(\mathrm{Sym}^d\mathcal{W}^\vee)} \arrow[d, "\varphi_2"] \\
                             & {\mathrm{Gr}(2,\mathcal{W})} \arrow[r, "\varphi_1"']                                                                                            & \mathrm{BGL}_{r+1}                                     
\end{tikzcd}
\end{center}
\begin{rem}
 In the special case that $r=1,$ $\mathrm{Gr}(2,2)=\mathrm{pt},$ so $[\mathrm{Gr}(2,2)/\GL_{2}]=\mathrm{BGL}_2$. Then, $[\Sigma_{\mu}]$ is the $\GL_2$-equivariant coincident root locus $[X_{\mu}]$.
\end{rem}
Let $d(\mu)$ denote the expected fiber dimension of $\pi_1|_{\Sigma_{\mu}}:$
\[d(\mu)=2(r-1)+n-c(\nu).\]
Depending on the sign of $d(\mu)$, we answer two different enumerative questions: 
\begin{enumerate}
    \item If $d(\mu)<0$, $\pi_{1}(\Sigma_{\mu})$ is a proper locus in the moduli space of degree $d$ hypersurfaces, and we compute its equivariant fundamental class in $A^*_{\GL_{r+1}}(\Pp H^0(\mathcal{O}_{\Pp^r}(d)))$.
    \item If $d(\mu)\geqslant 0$, then the locus of $\mu$-incident lines on a general hypersurface of degree $d$ is either empty or of dimension $d(\mu)$, and we compute its fundamental class in $A^*(\mathrm{Gr}(2,r+1))$.
\end{enumerate}
\begin{thm}\label{thm2}
  Let $\mathcal{W}$ be the universal rank $r+1$ vector bundle over $\mathrm{BGL}_{r+1}$ with Chern classes $c_1,\dots,c_{r+1}.$  Let $\gamma=c_1(\mathcal{O}(1))$ on $\Pp(\mathrm{Sym}^d\mathcal{W}^\vee)$, and let $\alpha:\mathrm{Gr}(2,\mathcal{W})\to \mathrm{Pic}_0^d$ 
    be the map corresponding to $\mathcal{O}_{\Pp\mathcal{S}}(d).$ Define $g_{\nu}^i=\alpha^* f_{\nu}^i\in A^{i-n}(\mathrm{Gr}(2,\mathcal{W}))$. Let $m_a$ denote the multiplicity of $a$ in $\nu$.
    The class of the universal $\mu$-incident line is 
    \[[\Sigma_{\mu}]=\frac{1}{\prod_{a}m_a!}\sum_{i=0}^{c(\nu)} \gamma^{c(\nu)-i} g_{\nu}^i.\] 
    \begin{enumerate}
        \item     When $d(\mu)<0,$ the class of $\pi_{1}(\Sigma_{\mu})$ is given by
    \[[V_{\mu}]=\frac{1}{\prod_{a}{m_a!}}\sum_{i=0}^{c(\nu)} \gamma^{c(\nu)-i}\beta_*g_{\nu}^i\in A^*_{\GL_{r+1}}(\Pp H^0(\mathcal{O}_{\Pp^r}(d))),\]
    where $\beta:\mathrm{Gr}(2,\mathcal{W})\to \mathrm{BGL}_{r+1}.$
    \item When $d(\mu)\geqslant 0$, the class of $\mu$-incident lines to a general degree $d$ hypersurface $X$ is given by evaluating $g_{\nu}^{c(\nu)}$ at $c_i=0$:
    \[[S_{\mu}(X)]=\frac{1}{\prod_{a}{m_a!}}g_{\nu}^{c(\nu)}|_{c_i=0}\in A^*(\mathrm{Gr}(2,r+1))\]
    \end{enumerate}
\end{thm}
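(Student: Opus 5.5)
We view $\Sigma_\mu$ as a relative de Jonqui\`eres locus and reduce to Theorem \ref{thm1}. Set $S=\mathrm{Gr}(2,\mathcal{W})$, let $f:\Pp\mathcal{S}\to S$ be the universal line, and take $\mathcal{L}=\mathcal{O}_{\Pp\mathcal{S}}(d)$. Then $f_*\mathcal{L}=\mathrm{Sym}^d\mathcal{S}^\vee$ is a vector bundle of rank $d+1$, since $H^1(\Pp^1,\mathcal{O}(d))=0$ for $d\geqslant 0$. Restricting forms gives a surjection of bundles on $S$,
\[\pi_2^*\varphi_1^*\mathrm{Sym}^d\mathcal{W}^\vee\to \mathrm{Sym}^d\mathcal{S}^\vee,\]
with kernel $K$. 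Projectivizing, the locus of pairs $(\ell,[F])$ with $F|_\ell\neq 0$ maps to $\Pp(\mathrm{Sym}^d\mathcal{S}^\vee)$ as an open piece of a projective bundle (a vector-bundle complement to $\Pp K$). Call this map $\rho$. Under it $\mathcal{O}(1)$ pulls back to $\mathcal{O}(1)$, so $\gamma$ corresponds to $\gamma$. By definition of $\mu$-incidence, $\Sigma_\mu$ is the closure of $\rho^{-1}(X_\mu^\circ)$, which is $\rho^{-1}(X_\mu)$ away from $\Pp K$.

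The key step is to control the extra component $\Pp K=\{F|_\ell\equiv 0\}$, where every contact condition holds trivially. We compute $[\Sigma_\mu]$ as a degeneracy-type class. Lift the construction of Theorem \ref{thm1}: on $\mathcal{C}^n$ over the ambient space, $\Sigma_\mu$ is the image of the zero locus of the section of $\mathcal{O}(1)\otimes\mathcal{F}_\nu(\mathscr{L})$ induced by $F$. Its top Chern class expands as $\sum_i\gamma^{c(\nu)-i}c_i(\mathcal{F}_\nu)$, and pushforward gives the displayed formula once the zero locus has the expected codimension $c(\nu)$. The question is whether $\Pp K$ contributes, since it has codimension $d+1$ in the ambient space. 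This is where the sum now starts at $i=0$ rather than $i=n$. I would argue that the zero scheme lies in $\mathcal{C}^n$, and $\Pp K\times(\text{fiber})^n$ has codimension $d+1-n\geqslant c(\nu)-n+1$. After pushing forward along fibers of relative dimension $n$, this component lands in codimension larger than $c(\nu)-n$ and contributes nothing. A similar check handles the collision loci, using that $\mathcal{F}_\nu$ is already built to treat diagonals, as in Theorem \ref{thm1}.

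\textbf{Main obstacle: transversality.} We must show that the section of $\mathcal{O}(1)\otimes\mathcal{F}_\nu$ is regular with reduced zero scheme, and that the map to $\Sigma_\mu$ has degree $\prod m_a!$, which comes from permuting equal $\nu_i$. I would check this fiberwise over $\mathrm{Gr}$, where it reduces to the coincident root locus case $r=1$. The universal family of $F$ restricts surjectively onto $\mathrm{Sym}^d$ of each line, so locally the total space is a product and the statement follows from the binary-form case of Theorem \ref{thm1}.

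For (1), when $d(\mu)<0$ the map $\pi_1|_{\Sigma_\mu}$ is generically injective onto its image, since a general hypersurface in the image has finitely many, generically one, incident lines. We then push forward along $\pi_1$. Because $\gamma$ is pulled back from $\Pp(\mathrm{Sym}^d\mathcal{W}^\vee)$, the projection formula gives $\pi_{1*}(\gamma^k\pi_2^*g)=\gamma^k\varphi_2^*\beta_*g$. This yields the stated formula; terms with $\deg g_\nu^i<\dim\mathrm{Gr}$ push forward to zero. Generic injectivity needs justification, and I would prove it by a dimension count on pairs of incident lines.

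For (2), when $d(\mu)\geqslant0$, fix a general $X$, i.e.\ restrict to a fiber point of $\Pp(\mathrm{Sym}^d\mathcal{W}^\vee)$ over a point of $\BGL_{r+1}$. This sets $c_i=0$, and $\gamma$ restricts to $0$ on a point. Only the $i=c(\nu)$ term survives, and by generic transversality (Kleiman–Bertini on the fiber), $[S_\mu(X)]=\Sigma_\mu|_{[X]}$ has the expected dimension $d(\mu)$.
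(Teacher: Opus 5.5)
Your overall architecture matches the paper's. You realize the marked locus $\widetilde{\Sigma}_\mu\subseteq(\Pp\mathcal{S})^n\times_{\BGL_{r+1}}\Pp(\mathrm{Sym}^d\mathcal{W}^\vee)$ as the zero scheme of $\widetilde{\pi}_1^*\mathcal{O}(-1)\to\widetilde{\pi}_2^*\mathcal{F}_\nu(\mathcal{L})$, take the top Chern class, and divide by $\prod_a m_a!$. Then you use the projection formula and flat base change for (1), and restriction to a general point for (2).

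\textbf{The gap is in the step you yourself flag as the main obstacle.} Theorem \ref{thm1} \emph{assumes} that the locus has the expected codimension. It proves nothing about regularity or reducedness, so ``the statement follows from the binary-form case of Theorem \ref{thm1}'' does not close the argument. The missing idea is that the composite
\[\widetilde{\pi}_2^*\widetilde{\varphi}_1^*\mathrm{Sym}^d\mathcal{W}^\vee\to\widetilde{\pi}_2^*g^*\mathrm{Sym}^d\mathcal{S}^\vee\to\widetilde{\pi}_2^*\mathcal{F}_\nu(\mathcal{L})\]
is a surjection of vector bundles. You noted that the first map is surjective. The second is surjective by Lemma \ref{surjective evaluation map}, since $\mathcal{O}(d)$ on $\Pp^1$ is $(c(\nu)-1)$-very ample when $c(\nu)\leqslant d$. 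Hence the zero scheme is exactly $\Pp\mathcal{K}$, where $\mathcal{K}$ is the kernel. This is a projective subbundle over $(\Pp\mathcal{S})^n$ of codimension $\mathrm{rk}\,\mathcal{F}_\nu(\mathcal{L})=c(\nu)$, so it is smooth and irreducible. The top Chern class therefore computes its fundamental class with no further transversality check (Proposition \ref{vanishing locus}).

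This also removes your worry about $\Pp K=\{F|_\ell\equiv 0\}$. Its preimage is just a closed subset of the irreducible $\Pp\mathcal{K}$, never a component. Its codimension in the incidence space is $d+1$, not $d+1-n$, because it is a flat pullback. Likewise, starting the sum at $i=0$ rather than $i=n$ is cosmetic: $g_\nu^i\in A^{i-n}$ vanishes for $i<n$, so it records no contribution from $\Pp K$.

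Irreducibility of $\Pp\mathcal{K}$ is also what your part (2) needs in order to say the general fiber of $\pi_1|_{\Sigma_\mu}$ is either empty or of dimension $d(\mu)$. You omit the empty case. There the formula returns $0$; by Lemma \ref{fiber dim}, the formula vanishes exactly when the general fiber is empty. Apart from that, your direct restriction to $\mathrm{Gr}(2,r+1)\times\{F\}$ is a cleaner version of the paper's computation of $\pi_{2*}([\Sigma_\mu]\cdot\pi_1^*\gamma^N)$ followed by $\tau^*$.

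In (1), your concern about generic injectivity of $\pi_1|_{\Sigma_\mu}$ is legitimate, since the paper writes $[V_\mu]=\pi_{1*}[\Sigma_\mu]$ without justification. However, your dimension count is only promised, not carried out.
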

\begin{ex}
    Continuing Example \ref{intro example}, let $d=6$ and $\mu=\nu=(2,2,2)$. When $r=2$, $d(\mu)=-1$, and $V_{\mu}$ has codimension $1$ in the moduli space of plane sextics. Theorem \ref{thm2} tells us that 
    \[[V_{\mu}]=-624c_1+312\gamma.\]
    However, when $r=3$, $d(\mu)=1$, so there is a one-dimensional family of $\mu$-incident lines. Theorem \ref{thm2} tells us that 
    \begin{align*}
       [S_{\mu}(X)]=624\sigma_{2,1},
    \end{align*}
    where $\sigma_{\lambda}$ denotes the standard Schubert classes on $\mathrm{Gr}(2,4).$ The degree of $[S_{\mu}(X)]$ is the number of tritangent lines on a sextic surface meeting a fixed general line. Equivalently, it is the degree of the scroll of tritangent lines to a general sextic surface, which was computed in \cite[Proposition 4.3]{ciliberto2024varietytritangentialplanesgeneral}.
    
    We note that the equivariant class $[V_{\mu}]$ determines $[S_{\mu}(X)]$ in the following way. Let $X=V(F)$ be a general sextic surface, and let $\ell$ be a general line. The planes containing $\ell$ form a pencil $B\cong \Pp^1.$ Consider the $\Pp^2$-bundle $\Pp \mathcal{U}\to B$ whose fiber over a member of the pencil $[H]\in B$ is the plane $H.$ Since $H$ contains $\ell$, we have $\mathcal{U}\cong \mathcal{O}_{B}^{\oplus 2}\oplus \mathcal{O}_{B}(-1).$ Then, $c_{1}$ pulls back to $c_{1}(\mathcal{U})=c_1(\mathcal{O}_B(-1))=-\zeta.$ Consider the map of vector bundles $\mathcal{O}_{B}\to \mathrm{Sym}^6\mathcal{U}^\vee$ sending $1\mapsto F|_{H}$, which is nowhere zero since $F$ is smooth. This determines a section $s_{F}:B\to \Pp(\mathrm{Sym}^6\mathcal{U}^\vee)$, and the pullback of the hyperplane class along this map is zero. Therefore, 
    \[[V_\mu]|_{B}=624\zeta,\]
    which tells us 
    that there are 624 planes $H$ in the pencil for which the plane sextic $X\cap H$ admits a tritangent line. A tritangent line to a sextic surface meeting a fixed general line $\ell$ determines a unique plane $H$. Conversely, a tritangent line meeting $X\cap H$ must meet $\ell,$ so we conclude that $\deg [V_{\mu}]|_{B}=\deg [S_{\mu}(X)].$
\end{ex}
\begin{ex}
    In the case that the hypersurface $X$ is a plane curve, 
    our formulas for $[V_{\mu}]$ are equivariant enrichments of the formulas in \cite{ciliberto2026contactinvariantsplanecurves}. The following table shows the $\GL_3$-equivariant classes of these loci up to codimension $2$. 
    \begin{xltabular}{\textwidth}{
    >{\centering\arraybackslash}p{1.5cm}
    >{\centering\arraybackslash}p{3 cm}
    >{\raggedright\arraybackslash}X
}
$\nu$ & Codimension & Equivariant Fundamental Class of $V_{\mu}$ \\ 
\midrule

$(4)$ & $1$ &
$\begin{aligned}
    &-2d(3d-2)(d-3)c_1
+
6(3d-2)(d-3)\gamma
\end{aligned}$ 
\\
\midrule

$(3,2)$ & $1$ &
$\begin{aligned}
    &-d(d-4)(d-3)(d^2+6d-4)c_1\\& \qquad
+
3(d-4)(d-3)(d^2+6d-4)\gamma
\end{aligned}$ 
\\
\midrule

$(2,2,2)$ & $1$ &
$\begin{aligned}
    & \frac{1}{6}\Bigl[-2d(d-5)(d-4)(d-3)(d^2+3d-2)c_1
 \\
    &\qquad +
6(d-5)(d-4)(d-3)(d^2+3d-2)\gamma\Bigr]
\end{aligned}$ 
\\
\midrule
$(5)$ & $2$ &
$\begin{aligned}
   & 10d(d-4)(d^2-3d+3)c_1^2
-10d(d-4)(4d-5)\gamma c_1 \\
&\qquad
+15(d-4)(4d-5)\gamma^2
-5d(d-4)(d-2)(2d-9)c_2
\end{aligned}$ 
\\
\midrule
$(3,3)$ & $2$ &
$\begin{aligned}
   &\frac{1}{2}\Bigl[d(d-5)(d-4)(d^3+9d^2-28d+30)c_1^2 \\
&\qquad
-6d(d-5)(d-4)(d^2+7d-9)\gamma c_1 \\
&\qquad
+9(d-5)(d-4)(d^2+7d-9)\gamma^2 \\
&\qquad
-3d(d-5)(d-4)(d-2)(2d-15)c_2\Bigr]
\end{aligned}$ 
\\
\midrule

$(4,2)$ & $2$ &
$\begin{aligned}
&d(d-5)(d-4)(d^3+8d^2-27d+30)c_1^2 \\
&\qquad
-4d(d-5)(d-4)(d^2+11d-14)\gamma c_1 \\
&\qquad
+6(d-5)(d-4)(d^2+11d-14)\gamma^2 \\
&\qquad
-d(d-5)^2(d-4)(d-2)(d+9)c_2
\end{aligned}$ 
\\
\midrule

$(3,2,2)$ & $2$ &
$\begin{aligned}
    &\frac{1}{2}\Bigr[d(d-6)(d-5)(d-4)(d+6)(2d^2-5d+5)c_1^2 \\
&\qquad
-2d(d-6)(d-5)(d-4)(5d^2+23d-30)\gamma c_1 \\
&\qquad
+3(d-6)(d-5)(d-4)(5d^2+23d-30)\gamma^2 \\
&\qquad
-d(d-6)(d-5)(d-4)(d-2)(d^2-45)c_2\Bigl]
\end{aligned}$ 
\\
\bottomrule
\end{xltabular}
\begin{rem}
    If we replace $\mathcal{W}$ with the twist $\mathcal{W}\otimes \mathcal{L}$ for any line bundle $\mathcal{L},$ we have an isomorphism of projective bundles $\Pp \mathcal{W}\cong \Pp (\mathcal{W}\otimes \mathcal{L})$
    sending 
    \begin{align*}
        c_1&\mapsto c_1+3c_1(\mathcal{L})\\
        c_2&\mapsto c_2+2c_1 c_1(\mathcal{L})+3c_1(\mathcal{L})^2,
    \end{align*}
    and 
    \begin{align*}
\gamma=c_1(\mathcal{O}_{\Pp(\mathrm{Sym}^d\mathcal{W}^\vee)}(1))\mapsto \gamma+dc_1(\mathcal{L}).
    \end{align*}
Invariance under this change of variables implies that the coefficient of $c_1$ in $V_{\mu}$ is $-d/3$ times the coefficient of $\gamma$. Hence, in codimension $1$, the non-equivariant fundamental class of $V_{\mu}$ fully determines the equivariant fundamental class. In codimension $2$, any class can be written as 
\[A\gamma^2+Bc_1\gamma+Cc_1^2+Dc_2,\]
and
invariance under the above change of variables implies that 
\[B=-\frac{2d}{3}A \qquad \qquad C=\frac{d^2}{9}A-\frac{1}{3}D.\]
We see that in codimension $2$ and higher, the non-equivariant fundamental class of $V_{\mu}$ does not fully determine the equivariant fundamental class.
\end{rem}
\end{ex}
In Section \ref{section6}, we also generalize these contact problems to the case of complete intersections in projective space, where we replace $\Pp(\mathrm{Sym}^d\mathcal{W}^\vee)$ with a partial compactification of Di Lorenzo's moduli stack of complete intersections \cite{dilorenzo2022intersectiontheorymodulismooth}.
\begin{conv}
    For a vector bundle $\mathcal{E}$ on $X$ we define its projectivization \[\pi:\Pp\mathcal{E}\coloneq \underline{\mathrm{Proj}}\left(\mathrm{Sym}^\bullet \mathcal{E}^\vee\right)\to X\] so we have the tautological inclusion $\mathcal{O}_{\Pp\mathcal{E}}(-1)\hookrightarrow\pi^*\mathcal{E}.$
\end{conv}
\subsection*{Acknowledgements}The author would like to thank Hannah Larson for many helpful conversations and feedback regarding this work. This project was inspired by generalizing and equivariantly enriching the $60$ hyperflexes in a pencil of plane quartics, a classical problem the author learned about from Bernd Sturmfels. The author is grateful for the support of the NSF Graduate Research Fellowship. The author acknowledges use of ChatGPT 5.6 for refining the proof of Lemma \ref{push forward rule}, for assistance generating SageMath code for computing examples with the weighted bundles of principal parts, and for proofreading. The manuscript was written entirely by the author, who assumes responsibility for all content.
\section{Weighted Bundles of Principal Parts}
Let $f:\Psi\to S$ be a family of genus $g$ curves, by which we mean a smooth proper morphism such that for every $s\in S$, the fiber $\Psi_s$ is a connected genus $g$ curve. Let $\mathcal{L}$ be a relative degree $d$ line bundle on $\Psi$, and fix $\nu=(\nu_1,\dots,\nu_n)$. Our goal is to define a vector bundle $\mathcal{F}_{\nu}(\mathcal{L})$ on $\Psi^n=\Psi\times_S\dots\times_S\Psi$ whose fibers encode the zeros of sections of $\mathcal{L}$. Let $p:\Psi^{n+1}\to \Psi^n$ be the projection onto the first $n$ factors and $p_i:\Psi^{n+1}\to \Psi$ for $1\leqslant i\leqslant n+1$ the projection onto the $i$th factor. We obtain the following Cartesian square: 
\begin{center}
    \begin{tikzcd}
\Psi^{n+1} \arrow[r, "p"] \arrow[d, "p_{n+1}"'] & \Psi^n \arrow[d, "g"] \\
\Psi \arrow[r, "f"']                 & S          
\end{tikzcd}
\end{center}
We define $D_{i,j}$ to be the diagonal where the $i$th and $j$th factors agree. Then, the diagonals $D_{i,n+1}$ determine sections of $p$. Further, we let  $q_j:\Psi^n\to \Psi$ for $1\leqslant j\leqslant n$ be the projection onto the $j$th factor. We define the sheaf 
\[\mathcal{F}_{\nu}(\mathcal{L})=p_{*}\left(p_{n+1}^*\mathcal{L}\otimes \mathcal{O}_{\Psi^{n+1}}/\mathcal{I}_{D_{1,n+1}}^{\nu_1}\cdots\mathcal{I}_{D_{n,n+1}}^{\nu_n}\right),\]
which we call the weighted principal parts bundle with weights $(\nu_1,\dots,\nu_n).$
\begin{prop}\label{properties}
The bundle $\mathcal{F}_{\nu}(\mathcal{L})$ has the following properties.
\begin{enumerate}
        \item $\mathcal{F}_{\nu}(\mathcal{L})$ is a vector bundle of rank $c(\nu)=\sum_{i=1}^n \nu_i$ whose construction commutes with base change on $S$.
        \item There is a natural evaluation map $g^*f_*\mathcal{L}\to \mathcal{F}_{\nu}(\mathcal{L})$.    \end{enumerate}
\end{prop}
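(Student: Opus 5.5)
The key object is the closed subscheme $Z\subseteq \Psi^{n+1}$ cut out by the ideal $\mathcal{I}_Z=\mathcal{I}_{D_{1,n+1}}^{\nu_1}\cdots\mathcal{I}_{D_{n,n+1}}^{\nu_n}$. The plan is to show that $Z$ is a relative effective Cartier divisor on $\Psi^{n+1}\to\Psi^n$, finite and flat of degree $c(\nu)$, and then deduce everything from standard facts about pushforwards along finite flat maps.

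First, each $D_{i,n+1}$ is the image of a section of the smooth relative curve $p:\Psi^{n+1}\to\Psi^n$. A section of a smooth morphism of relative dimension one is a relative effective Cartier divisor. Concretely, étale locally on $\Psi^{n+1}$ there is a relative coordinate $t$, and $D_{i,n+1}$ is cut out by $t-t(q_i)$, which is a nonzerodivisor on every fiber. Hence $\mathcal{I}_{D_{i,n+1}}$ is invertible, and products of invertible ideals are invertible. So $\mathcal{I}_Z$ is invertible and $Z=\sum_i\nu_iD_{i,n+1}$ as Cartier divisors. Because each summand restricts on every fiber $\Psi_s\times\{(x_1,\dots,x_n)\}$ to a nonzero effective divisor, the sum does too. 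By the fiberwise criterion (a Cartier divisor on a flat family whose restriction to each fiber is Cartier is flat over the base), $Z$ is flat over $\Psi^n$. It is proper and quasi-finite, hence finite. On the fiber over $(x_1,\dots,x_n)$ it is the divisor $\sum\nu_ix_i$ on a curve, of length $c(\nu)$. The divisor point of view is what handles the colliding points automatically: when $x_i=x_j$ the multiplicities simply add.

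For part (1), $\mathcal{F}_\nu(\mathcal{L})=p_*(p_{n+1}^*\mathcal{L}|_Z)$ is the pushforward of a line bundle on $Z$ along a finite flat map of degree $c(\nu)$. It is therefore locally free of rank $c(\nu)$. Affine pushforward commutes with arbitrary base change, and the formation of $Z$ commutes with base change on $S$: the ideal of $D_{i,n+1}$ pulls back to the corresponding ideal, and because these are invertible ideals, products pull back to products with no Tor issues. Hence $\mathcal{F}_\nu$ commutes with base change and has the fibers stated in the introduction.

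For part (2), flat base change along the Cartesian square gives $g^*f_*\mathcal{L}\cong p_*p_{n+1}^*\mathcal{L}$, since $f$ is flat. Composing with $p_*$ of the restriction map $p_{n+1}^*\mathcal{L}\to p_{n+1}^*\mathcal{L}\otimes\mathcal{O}_{\Psi^{n+1}}/\mathcal{I}_Z$ gives the evaluation map. Naturality follows from the base-change compatibility already established.

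The main obstacle is the flatness of $Z$ over $\Psi^n$ near the big diagonals, where the divisors $D_{i,n+1}$ meet. I would handle it with the local-coordinate argument: locally $\mathcal{I}_Z$ is generated by $\prod_i(t-a_i)^{\nu_i}$, a monic polynomial of degree $c(\nu)$ in $t$ over the base. So $\mathcal{O}_Z$ is locally free with basis $1,t,\dots,t^{c(\nu)-1}$, by the Weierstrass division idea applied in the étale-local or completed setting.
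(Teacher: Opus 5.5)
Your argument is correct, and for part (1) it takes a different route from the paper. The paper computes that every fiber has $h^0\bigl(\Psi_s,\mathcal{L}|_{\Psi_s}\otimes\mathcal{O}_{\Psi_s}/\mathcal{I}_{p_1}^{\nu_1}\cdots\mathcal{I}_{p_n}^{\nu_n}\bigr)=c(\nu)$ and then appeals to cohomology and base change. You instead show that $Z=\sum_i\nu_iD_{i,n+1}$ is a relative effective Cartier divisor, hence finite and flat of degree $c(\nu)$ over $\Psi^n$. Local freeness and compatibility with base change then follow because $p|_Z$ is finite locally free and affine.

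The paper's version is shorter, but it silently relies on the flatness hypothesis that cohomology and base change requires: $p_{n+1}^*\mathcal{L}\otimes\mathcal{O}_{\Psi^{n+1}}/\mathcal{I}_Z$ must be flat over $\Psi^n$. It also needs Noetherian hypotheses, and either a reduced base or the vanishing of $H^1$ on the zero-dimensional fibers. Your argument proves exactly that flatness and needs no Noetherian or reducedness assumptions on $S$. It also makes transparent why multiplicities add when points collide.

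For part (2) the two constructions coincide. The paper's adjunction produces precisely the base-change morphism $g^*f_*\mathcal{L}\to p_*p_{n+1}^*\mathcal{L}$ followed by $p_*$ of the quotient map. The flat base change isomorphism you cite is harmless but not needed to define the map.

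Two small points. First, extension of ideals always commutes with products, so invertibility is not what makes the formation of $Z$ commute with base change; it is simply that images of sections pull back to images of sections. Second, your final local-coordinate paragraph is redundant given the fiberwise flatness criterion. As written it is only heuristic: near a point, $Z$ is étale over $\Spec \mathcal{O}_{\Psi^n}[t]/\bigl(\prod_i(t-a_i)^{\nu_i}\bigr)$ rather than equal to it. That étaleness already suffices for flatness.
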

\begin{proof}
For $(1)$, over a point $y=(s,p_1,\dots,p_n)\in \Psi^n$, letting $p^{-1}(y)=\Psi_s,$ we have
\[h^0(\Psi_s,\mathcal{L}|_{\Psi_s}\otimes \mathcal{O}_{\Psi_s}/\mathcal{I}_{p_1}^{\nu_1}\dots\mathcal{I}_{p_n}^{\nu_n})=c(\nu),\]
and the result follows from cohomology and base change.
For $(2)$, note that we have the natural map $f^*f_*\mathcal{L}\to \mathcal{L},$ and pullback to get a map $p_{n+1}^*f^*f_*\mathcal{L}\to p_{n+1}^*\mathcal{L}$, and then compose with the quotient map $p_{n+1}^*\mathcal{L}\to p_{n+1}^*\mathcal{L}\otimes \mathcal{O}_{\Psi^{n+1}}/\mathcal{I}_{D_{1,n+1}}^{\nu_1}\dots\mathcal{I}_{D_{n,n+1}}^{\nu_n}$ to get the composition 
\[p_{n+1}^*f^*f_*\mathcal{L}\to p_{n+1}^*\mathcal{L}\otimes \mathcal{O}_{\Psi^{n+1}}/\mathcal{I}_{D_{1,n+1}}^{\nu_1}\dots\mathcal{I}_{D_{n,n+1}}^{\nu_n}.\]
Then, since $f\circ p_{n+1}=g\circ p,$ we have a map 
\[p^*g^*f_*\mathcal{L}\to p_{n+1}^*\mathcal{L}\otimes \mathcal{O}_{\Psi^{n+1}}/\mathcal{I}_{D_{1,n+1}}^{\nu_1}\dots\mathcal{I}_{D_{n,n+1}}^{\nu_n},\]
so by adjunction, we have a natural map 
\[g^*f_*\mathcal{L} \to p_*(p_{n+1}^*\mathcal{L}\otimes \mathcal{O}_{\Psi^{n+1}}/\mathcal{I}_{D_{1,n+1}}^{\nu_1}\dots\mathcal{I}_{D_{n,n+1}}^{\nu_n})\coloneq \mathcal{F}_{\nu}(\mathcal{L}).\]
\end{proof}
\begin{lem}\label{surjective evaluation map}
    The evaluation map in Proposition \ref{properties} is surjective when $\mathcal{L}$ is relatively $(c(\nu)-1)$-very ample.
\end{lem}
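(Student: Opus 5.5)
Since $\mathcal{F}_{\nu}(\mathcal{L})$ is a vector bundle whose formation commutes with base change (Proposition \ref{properties}(1)), surjectivity of $g^*f_*\mathcal{L}\to\mathcal{F}_{\nu}(\mathcal{L})$ can be checked on fibers at geometric points $y=(s,p_1,\dots,p_n)\in\Psi^n$ by Nakayama's lemma. We would first record that, under the hypothesis of the lemma, the formation of $f_*\mathcal{L}$ also commutes with base change, so that the fiber of the evaluation map at $y$ is the restriction map
\[H^0(\Psi_s,\mathcal{L}_s)\longrightarrow H^0\bigl(\Psi_s,\mathcal{L}_s\otimes\mathcal{O}_{\Psi_s}/\mathcal{I}_{p_1}^{\nu_1}\cdots\mathcal{I}_{p_n}^{\nu_n}\bigr).\]
Strictly, this needs $f_*\mathcal{L}$ to be locally free with $H^0$ of fibers of constant dimension. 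This is part of the standing assumption in the paper that $f_*\mathcal{L}$ is a vector bundle. Alternatively, it follows by cohomology and base change if one reads relative $k$-very ampleness as including the vanishing $H^1(\Psi_s,\mathcal{L}_s)=0$.

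On the fiber, set $Z=\sum_i\nu_ip_i$. The product of ideals $\mathcal{I}_{p_1}^{\nu_1}\cdots\mathcal{I}_{p_n}^{\nu_n}$ on the smooth curve $\Psi_s$ is the ideal sheaf $\mathcal{O}_{\Psi_s}(-Z)$ of the effective Cartier divisor $Z$. This holds even when some $p_i$ coincide, since the ideals are invertible and the exponents add at a repeated point. Hence the target is $H^0(\mathcal{L}_s|_Z)$ and $Z$ is a subscheme of length $c(\nu)$.

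By definition, $\mathcal{L}_s$ being $k$-very ample means that for every zero-dimensional subscheme $Z\subseteq\Psi_s$ of length $k+1$ the restriction $H^0(\mathcal{L}_s)\to H^0(\mathcal{L}_s|_Z)$ is surjective. With $k=c(\nu)-1$ the length of $Z$ is exactly $k+1$, so the fiber map is surjective. Nakayama then gives surjectivity of the map of sheaves on all of $\Psi^n$.

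We expect no real obstacle here. The only steps needing care are the identification of the fiber of the product ideal with $\mathcal{O}(-Z)$ at points where diagonals meet, and the justification that the source fiber is $H^0(\Psi_s,\mathcal{L}_s)$. The latter is dealt with by the base change remark in the first paragraph.
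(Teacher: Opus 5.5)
Your proposal is correct and follows essentially the same route as the paper. The paper also uses cohomology and base change to identify the fiber of $g^*f_*\mathcal{L}\to\mathcal{F}_{\nu}(\mathcal{L})$ at $(s,p_1,\dots,p_n)$ with the restriction $H^0(\mathcal{L}|_{\Psi_s})\to H^0(\mathcal{L}|_{D_s})$, where $D_s=\sum_i\nu_ip_i$ has length $c(\nu)$, and then concludes from $(c(\nu)-1)$-very ampleness. Your Nakayama step, the treatment of coincident points, and the base-change remark for $f_*\mathcal{L}$ spell out what the paper compresses into ``by cohomology and base change''; only note that local freeness of $f_*\mathcal{L}$ alone does not force base change, so that step rests on the implicit assumption you flag rather than being strictly part of the paper's stated hypothesis.
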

\begin{proof}
Let $D=\sum_{i=1}^n\nu_iD_{i,n+1} \subseteq \Psi^{n+1}$, and let $D_{s}=D|_{\Psi_{s}},$ which is length $c(\nu).$ Since $\mathcal{L}|_{\Psi_s}$ is $c(\nu)-1$ very ample, the induced map on cohomology $H^0(\mathcal{L}|_{\Psi_s})\to H^0(\mathcal{L}|_{D_s})$ is surjective.
By cohomology and base change, this is exactly the map on fibers of the evaluation map $g^*f_*\mathcal{L}\to \mathcal{F}_{\nu}(\mathcal{L}).$
\end{proof}
By cohomology of line bundles on $\Pp^1,$ we note that in the case $\Psi\to S$ is a family of genus $0$ curves, $\mathcal{L}$ is relatively $(c(\nu)-1)$-very ample if and only if $d-c(\nu)\geqslant -1$. 
\begin{rem}
    In the case that $\nu=(d),$ $\mathcal{F}_{\nu}(\mathcal{L})$ is a relative bundle of principal parts. We refer the reader to  \cite[Section 11.1]{eisenbud20163264} for the definition and properties of relative bundles of principal parts. Indeed, consider the diagram 
    \begin{center}
        \begin{tikzcd}
\Psi^2 \arrow[r, "p"] \arrow[d, "p_2"'] & \Psi \arrow[d] \\
\Psi \arrow[r]             & S            
\end{tikzcd}
    \end{center}
We have that 
    \[\mathcal{F}_{\nu}(\mathcal{L})=p_*(p_2^*\mathcal{L}\otimes \mathcal{O}_{\Psi^2}/\mathcal{I}_{\Delta}^d)=\mathcal{P}_{\Psi/S}^{d-1}(\mathcal{L}).\]
    
  \end{rem}
\subsection{Chern classes of $\mathcal{F}_{\nu}(\mathcal{L})$}
We now give a formula for the Chern classes of the vector bundle $\mathcal{F}_{\nu}(\mathcal{L})$. 
\begin{thm}\label{chern classes}
    Let $\mathcal{E}_{\nu}(\mathcal{L})=p_*\left({p_{n+1}^*\mathcal{L}\otimes \mathcal{O}_{\Psi^{n+1}}\left(-\sum_{i=1}^n \nu_iD_{i,n+1}\right)}\right).$ Then, $\mathcal{F}_{\nu}(\mathcal{L})$ admits a filtration by subbundles that are kernels of the natural surjections $\mathcal{F}_{\nu}(\mathcal{L})\to \mathcal{F}_{\nu'}(\mathcal{L})$ for $(\nu_1,\dots,\nu_n)\geqslant (\nu_1',\dots,\nu_n').$ The graded pieces of the filtration are identified by the exact sequences 
    \begin{center}
    \begin{tikzcd}
0 \arrow[r] & {\mathcal{E}_{\nu}(\mathcal{L})|_{D_{j,n+1}}} \arrow[r] & {\mathcal{F}_{\nu_1,\dots,\nu_j+1,\dots,\nu_n}}(\mathcal{L}) \arrow[r] & {\mathcal{F}_{\nu}} (\mathcal{L})\arrow[r] & 0
\end{tikzcd}
\end{center}
for $1\leqslant j\leqslant n.$ Then, the total Chern class of $\mathcal{F}_{\nu}(\mathcal{L})$ is given by 
\begin{align*}
    c(\mathcal{F}_{\nu}(\mathcal{L}))&=\prod_{j_1=0}^{\nu_1-1}c(\mathcal{E}_{j_1,0,\dots,0}(\mathcal{L})|_{D_{1,n+1}})\cdots \prod_{j_n=0}^{\nu_n-1}c(\mathcal{E}_{\nu_1,\dots,\nu_{n-1},j_n}(\mathcal{L})|_{D_{n,n+1}})\\&=\prod_{i=1}^n\prod_{j=0}^{\nu_i-1}c(\mathcal{E}_{\nu_1,\dots,\nu_{i-1},j,0,\dots,0}(\mathcal{L})|_{D_{i,n+1}}).
\end{align*}
\end{thm}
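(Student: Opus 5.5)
The plan is to get the exact sequence from an exact sequence of sheaves on $\Psi^{n+1}$ and push it forward along $p$. Then the Chern class formula follows by iterating the sequence and using Whitney's formula.

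Set $D=\sum_i \nu_i D_{i,n+1}$ and $D'=D+D_{j,n+1}$. These are effective Cartier divisors on $\Psi^{n+1}$: each $D_{i,n+1}$ is the image of a section of the smooth curve family $p$, hence a relative Cartier divisor. The products of ideals $\mathcal{I}_{D_{1,n+1}}^{\nu_1}\cdots\mathcal{I}_{D_{n,n+1}}^{\nu_n}$ are then invertible ideals equal to $\mathcal{O}(-D)$. The inclusion $\mathcal{O}(-D')\subseteq \mathcal{O}(-D)$ gives the short exact sequence
\[0\to \mathcal{O}(-D)/\mathcal{O}(-D')\to \mathcal{O}/\mathcal{O}(-D')\to \mathcal{O}/\mathcal{O}(-D)\to 0,\]
and the left term is $\mathcal{O}(-D)\otimes \mathcal{O}_{D_{j,n+1}}$. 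We tensor with $p_{n+1}^*\mathcal{L}$ and apply $p_*$. All three sheaves are finite over $\Psi^n$, since each is supported on a divisor finite over $\Psi^n$, so $R^1p_*$ vanishes and the sequence stays exact. The middle and right terms become $\mathcal{F}_{\nu_1,\dots,\nu_j+1,\dots,\nu_n}(\mathcal{L})$ and $\mathcal{F}_\nu(\mathcal{L})$, and the right map is the natural surjection.

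The left term is $p_*\big(p_{n+1}^*\mathcal{L}(-D)|_{D_{j,n+1}}\big)$. Since $p|_{D_{j,n+1}}$ is an isomorphism onto $\Psi^n$ with inverse the section $\sigma_j$, this is $\sigma_j^*\big(p_{n+1}^*\mathcal{L}(-D)\big)$. This is the one identification needing care: I must check that it agrees with what the statement writes as $\mathcal{E}_\nu(\mathcal{L})|_{D_{j,n+1}}$. I read that notation as the line bundle $p_{n+1}^*\mathcal{L}(-D)$ restricted to $D_{j,n+1}\cong\Psi^n$, that is, its pullback along $\sigma_j$. If it is literally meant as $p_*$ of the twisted line bundle, I would argue that this agrees with the restriction via base change along $\sigma_j$; the pushforward along $p|_D$ is finite, so no higher cohomology interferes. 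This is a line bundle, and its first Chern class is computable from $\sigma_j^*[D_{i,n+1}]=[\Delta_{ij}]$ for $i\neq j$ and $\sigma_j^*[D_{j,n+1}]=-q_j^*c_1(\omega_f)$, by the self-intersection formula for a section of a smooth curve family. The main obstacle is making this restriction step and the invertibility of the ideal products rigorous along the diagonals where the $D_{i,n+1}$ meet. Cartier-ness handles both, since products of invertible ideals are invertible even where the supports meet.

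For the filtration, I order the weights lexicographically. I start from $\nu'=0$ with $\mathcal{F}_0=0$ and raise the weights one at a time in the order $(1,0,\dots,0),\dots,(\nu_1,0,\dots,0),(\nu_1,1,0,\dots),\dots,\nu$. Each step is one of the exact sequences above, and its kernel is the line bundle $\mathcal{E}_{\nu_1,\dots,\nu_{i-1},j,0,\dots,0}(\mathcal{L})|_{D_{i,n+1}}$ for $0\le j\le\nu_i-1$. Composing the surjections $\mathcal{F}_\nu\to\mathcal{F}_{\nu'}$ gives a chain of kernels, which is the filtration by subbundles; each $\mathcal{F}_{\nu'}$ is locally free by Proposition~\ref{properties}, so these kernels are subbundles. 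Applying the Whitney formula along the chain yields the product formula for $c(\mathcal{F}_\nu(\mathcal{L}))$.
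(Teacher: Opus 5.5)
Your proposal is correct and follows essentially the same route as the paper. The paper gets the same short exact sequence on $\Psi^{n+1}$ from a snake-lemma diagram, which is equivalent to your sequence for $\mathcal{O}(-D')\subseteq\mathcal{O}(-D)\subseteq\mathcal{O}$ twisted by $p_{n+1}^*\mathcal{L}$; it then pushes forward using the vanishing of $R^1p_*$ on the term supported on $D_{j,n+1}$, and iterates along the same lexicographic path with Whitney's formula. Your primary reading of $\mathcal{E}_\nu(\mathcal{L})|_{D_{j,n+1}}$ as $\sigma_j^*\bigl(p_{n+1}^*\mathcal{L}(-D)\bigr)$ is the one the paper intends, as its subsequent Chern class computation confirms; drop the fallback, since if it means restricting the sheaf $p_*\bigl(p_{n+1}^*\mathcal{L}(-D)\bigr)$ it is false, because that sheaf has fibers $H^0(C,\mathcal{L}(-\sum_i\nu_ip_i))$ and is generally not of rank one.
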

\begin{proof}
    Let $\mathcal{E}'_{\nu}(\mathcal{L})=p_{n+1}^*\mathcal{L}\otimes \mathcal{O}_{\Psi^{n+1}}\left(-\sum_{i=1}^n\nu_iD_{i,n+1}\right)$ and 
\[\mathcal{F}_{\nu}'(\mathcal{L})=p_{n+1}^*\mathcal{L}\otimes\mathcal{O}_{\Psi^{n+1}}/\mathcal{I}_{D_{1,n+1}}^{\nu_1}\cdots \mathcal{I}_{D_{n,n+1}}^{\nu_n}.\] Since the diagonals $D_{i,j}$ are effective Cartier divisors, we have the exact sequences
\begin{center}
    \begin{tikzcd}
0 \arrow[r] & \mathcal{E}'_{\nu}(\mathcal{L})\arrow[r] & p_{n+1}^*\mathcal{L} \arrow[r] & {\mathcal{F}'_{\nu}} (\mathcal{L})\arrow[r] & 0
\end{tikzcd}
\end{center}
and 
\begin{center}
    \begin{tikzcd}
0 \arrow[r] & {\mathcal{E}'_{\nu_1,\dots,\nu_{j}+1,\dots,\nu_n}}(\mathcal{L}) \arrow[r] & {\mathcal{E}'_{\nu}}(\mathcal{L}) \arrow[r] & {\mathcal{E}'_{\nu}(\mathcal{L})|_{D_{j,n+1}}} \arrow[r] & 0
\end{tikzcd}
\end{center}
which come from the closed subscheme exact sequence for $D_{j,n+1}.$ We put these sequences together into the following diagram:
\begin{equation}\label{snake diagram}
    \begin{tikzcd}
            & 0 \arrow[r] \arrow[d]                                                           & 0 \arrow[r] \arrow[d]                                         & {\mathcal{E}'_{\nu}(\mathcal{L})|_{D_{j,n+1}}} \arrow[d]           &   \\
0 \arrow[r] & {\mathcal{E}'_{\nu_1,\dots,\nu_{j}+1,\dots,\nu_n}(\mathcal{L})} \arrow[r] \arrow[d] & {\mathcal{E}'_{\nu}(\mathcal{L})} \arrow[r] \arrow[d] & {\mathcal{E}'_{\nu}(\mathcal{L})|_{D_{j,n+1}}} \arrow[d] \arrow[r] & 0 \\
0 \arrow[r] & p_{n+1}^*\mathcal{L} \arrow[d] \arrow[r, equal]                    & p_{n+1}^*\mathcal{L} \arrow[r] \arrow[d]                       & 0 \arrow[d]                                                          &   \\
            & {\mathcal{F}'_{\nu_1,\dots,\nu_{j}+1,\dots,\nu_n}(\mathcal{L})} \arrow[r] \arrow[d]   & {\mathcal{F}'_{\nu}(\mathcal{L})} \arrow[r] \arrow[d] & 0                                                                    &   \\
            & 0                                                                               & 0                                                             &                                                                      &  
\end{tikzcd}
\end{equation}
Then, from the Snake Lemma, we get the exact sequences
 \begin{center}
    \begin{tikzcd}
0 \arrow[r] & \mathcal{E}'_{\nu}(\mathcal{L})|_{D_{j,n+1}} \arrow[r] & \mathcal{F}'_{\nu_1,\dots,\nu_j+1,\dots,\nu_n}(\mathcal{L}) \arrow[r]  & \mathcal{F}'_{\nu} (\mathcal{L})\arrow[r] & 0
\end{tikzcd}
\end{center}
for $1\leqslant j\leqslant n.$
When we pushforward by $p_*:\Psi^{n+1}\to \Psi^n$, we have $R^1p_*(\mathcal{E}'_{\nu}|_{D_{j,n+1}})=0$ since $p|_{D_{j,n+1}}$ is an isomorphism. It then follows that the sequence
 \begin{center}
    \begin{tikzcd}
0 \arrow[r] & \mathcal{E}_{\nu}(\mathcal{L})|_{D_{j,n+1}} \arrow[r] & \mathcal{F}_{\nu_1,\dots,\nu_j+1,\dots,\nu_n}(\mathcal{L}) \arrow[r]  & \mathcal{F}_{\nu}(\mathcal{L}) \arrow[r] & 0
\end{tikzcd}
\end{center}
is exact for $1\leqslant j\leqslant n$. 

Using that $\mathcal{F}_{0,\dots,0}(\mathcal{L})=0$, the filtration above allows us to compute $c(\mathcal{F}_{\nu}(\mathcal{L}))$ by picking a sequence of $n$-tuples $(e_1,\dots,e_n)$ from $(0,\dots,0)\to (\nu_1,\dots,\nu_n)$ that increases the value of one coordinate by one at each step. The formula given in Theorem \ref{chern classes} corresponds to the path that increases $e_1$ to $\nu_1$, and then $e_2$ to $\nu_2$, and so on (i.e.  $(0,\dots,0)\to (1,0,\dots,0)\to \dots \to (\nu_1,0,\dots,0)\to \dots \to (\nu_1,\dots,\nu_{n-1},0)\to \dots \to (\nu_1,\dots,\nu_{n-1},1)\to \dots \to (\nu_1,\dots,\nu_n)$).
\end{proof}
We can more explicitly describe the Chern classes of $\mathcal{F}_{\nu}(\mathcal{L})$ by computing the Chern classes of $\mathcal{E}_{\nu_1,\dots,\nu_n}(\mathcal{L})|_{D_{j,n+1}}.$ Let $i_j:D_{j,n+1}\to \Psi^{n+1}.$ Since $p_{n+1}\circ i_j=q_j$ under the isomorphism $D_{j,n+1}\cong \Psi^n,$ 
we have 
\begin{align*}
i^*_j\left(p_{n+1}^*\mathcal{L}\otimes \mathcal{O}_{\Psi^{n+1}}\left(-\sum_{i=1}^n\nu_iD_{i,n+1}\right)\right)&=q_j^*\mathcal{L}\otimes i_j^*\mathcal{O}_{\Psi^{n+1}}\left(-\sum_{i=1}^n\nu_iD_{i,n+1}\right)\\&=q_j^*\mathcal{L}\otimes \,\bigotimes_{i\neq j}^n\mathcal{O}_{\Psi^n}(-\nu_i D_{ij})\otimes (\mathcal{N}_{i_j}^\vee)^{\otimes \nu_j},
\end{align*}
Hence, 
\begin{equation}\label{chern E}
c(\mathcal{E}_{\nu}(\mathcal{L})|_{D_{j,n+1}})=1+c_1(q_j^*\mathcal{L})-\sum_{\substack{i=1\\i\neq j}}^n\nu_i[D_{ij}]+\nu_jc_1(\mathcal{N}_{i_j}^\vee),
\end{equation}
which leads us to the following formula for $c(\mathcal{F}_{\nu}(\mathcal{L})):$
\begin{prop}\label{Chern formula}
    Let $\zeta_j=c_1(q_j^*\mathcal{L})$, $\delta_{ij}=[D_{ij}],$ and $\psi_j=c_1(\mathcal{N}_{i_j}^\vee)=c_1(q_j^*\omega_{f}).$ Then, substituting equation \eqref{chern E} into the formula in Theorem \ref{chern classes} gives
    \begin{align*}
        c(\mathcal{F}_{\nu}(\mathcal{L}))=&\prod_{j_1=0}^{\nu_1-1}(1+\zeta_1+j_1\psi_1)\prod_{j_2=0}^{\nu_2-1}(1+\zeta_2-\nu_1\delta_{12}+j_2\psi_2)\cdots\\&\prod_{j_3=0}^{\nu_3-1}\left(1+\zeta_3-\sum_{i=1}^2\nu_i\delta_{i3}
+j_3\psi_3\right)\dots\prod_{j_n=0}^{\nu_n-1}\left(1+\zeta_n-\sum_{i=1}^{n-1}\nu_{i}\delta_{in}+j_n\psi_n\right)\\&=\prod_{k=1}^n\prod_{j=0}^{\nu_k-1}\left(1+\zeta_k-\sum_{i<k}\nu_i\delta_{ik}+j\psi_k\right).
    \end{align*}
\end{prop}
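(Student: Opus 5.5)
The plan is to substitute \eqref{chern E} into the product formula of Theorem \ref{chern classes}. The one ingredient still to be justified is the identification $c_1(\mathcal{N}_{i_j}^\vee)=c_1(q_j^*\omega_f)$; once we have it, the formula follows by bookkeeping.

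\textbf{Step 1: the relevant line bundles.} Consider the factor indexed by $(i,j)$ in Theorem \ref{chern classes}, namely
\[c(\mathcal{E}_{\nu_1,\dots,\nu_{i-1},j,0,\dots,0}(\mathcal{L})|_{D_{i,n+1}}).\]
Here the weights are $\nu_1,\dots,\nu_{i-1}$ in the slots $k<i$, the weight $j$ in slot $i$, and $0$ in the slots $k>i$. The sheaf $\mathcal{E}$ is the pushforward of a line bundle on $\Psi^{n+1}$. Because $p|_{D_{i,n+1}}$ is an isomorphism, restricting $\mathcal{E}$ to $D_{i,n+1}$ and pushing forward just gives the line bundle $i_i^*\mathcal{E}'$ on $D_{i,n+1}\cong\Psi^n$. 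This is the identification already used in the text.

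\textbf{Step 2: apply \eqref{chern E}.} Applying \eqref{chern E} to these weights, the diagonals $D_{ki}$ with $k>i$ carry weight $0$ and drop out. The only remaining diagonal terms are $-\sum_{k<i}\nu_k\delta_{ki}$, and the normal-bundle term is $j\,c_1(\mathcal{N}_{i_i}^\vee)$. For the pullback $i_i^*\mathcal{O}(-D_{k,n+1})=\mathcal{O}(-D_{ki})$ with $k\ne i$, we use that $D_{k,n+1}$ meets $D_{i,n+1}$ transversally along the locus where the $k$-th and $i$-th points agree. For the self-term we use $i_i^*\mathcal{O}(D_{i,n+1})=\mathcal{N}_{i_i}$. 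Renaming indices gives $(1+\zeta_k-\sum_{i<k}\nu_i\delta_{ik}+j\psi_k)$, and taking the product gives the stated formula.

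\textbf{Step 3: the normal bundle (main point).} We must identify $\mathcal{N}_{i_j}^\vee$ with $q_j^*\omega_f$. The divisor $D_{j,n+1}$ is the pullback of the diagonal $\Delta\subset\Psi\times_S\Psi$ under the map
\[(p_j,p_{n+1})\colon\Psi^{n+1}\to\Psi\times_S\Psi.\]
This map is flat, being a base change of the smooth morphism $\Psi^{n-1}\to S$, so normal bundles pull back. The conormal bundle of the diagonal of a smooth morphism is $\Omega_{\Psi/S}=\omega_f$, since the fibers are curves. Composing with $D_{j,n+1}\cong\Psi^n\xrightarrow{q_j}\Psi$ then gives $\mathcal{N}_{i_j}^\vee\cong q_j^*\omega_f$. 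We also need the transversality of $D_{k,n+1}$ and $D_{i,n+1}$ used in Step 2. It holds because both divisors pull back from $\Psi^n$ via the section structure, so scheme-theoretically $D_{k,n+1}\cap D_{i,n+1}\cong D_{ki}$ inside $D_{i,n+1}\cong\Psi^n$.
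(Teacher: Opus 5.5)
Your proposal is correct and follows the paper's route: restrict $\mathcal{E}'$ to $D_{i,n+1}\cong\Psi^n$ to get equation \eqref{chern E}, then substitute the weights $(\nu_1,\dots,\nu_{i-1},j,0,\dots,0)$ from the path in Theorem \ref{chern classes}. Your Step 3 justifies two facts the paper asserts without proof, namely $\mathcal{N}_{i_j}^\vee\cong q_j^*\omega_f$ via flat pullback of the relative diagonal and $i_j^*\mathcal{O}(D_{k,n+1})\cong\mathcal{O}(D_{kj})$ for $k\neq j$, and both justifications are sound.
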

\begin{rem}
 The formula above for the Chern classes of $\mathcal{F}_{\nu}(\mathcal{L})$ seems to depend on a  path from $(0,\dots,0)\to (\nu_1,\dots,\nu_n)$; however, all paths yield trivially equivalent formulas. To a path from $(0,\dots,0)\to (\nu_1,\dots, \nu_n)$, associate the step sequence denoting which index was raised. All paths differ by a series of adjacent transpositions in their step sequences, so it suffices to show that $c(\mathcal{F}_{\nu}(\mathcal{L}))$ is invariant under these adjacent transpositions. This amounts to showing 
  \[c(\mathcal{E}_{e_1,\dots,e_n}(\mathcal{L})|_{D_{j,n+1}})c(\mathcal{E}_{e_1,\dots,e_{j}+1,\dots,e_n}(\mathcal{L})|_{D_{i,n+1}})=c(\mathcal{E}_{e_1,\dots,e_n}(\mathcal{L})|_{D_{i,n+1}})c(\mathcal{E}_{e_1,\dots,e_{i}+1,\dots,e_n}(\mathcal{L})|_{D_{j,n+1}}).\]
  From equation \eqref{chern E}, we have 
  \begin{align*}
      & c(\mathcal{E}_{e_1,\dots,e_n}(\mathcal{L})|_{D_{j,n+1}})c(\mathcal{E}_{e_1,\dots,e_{j}+1,\dots,e_n}(\mathcal{L})|_{D_{i,n+1}})-c(\mathcal{E}_{e_1,\dots,e_n}(\mathcal{L})|_{D_{i,n+1}})c(\mathcal{E}_{e_1,\dots,e_{i}+1,\dots,e_n}(\mathcal{L})|_{D_{j,n+1}})\\&=\delta_{ij}\left(\zeta_i-\zeta_j-\sum_{\substack{k=1\\k\neq i}}^n e_k\delta_{ki}+\sum_{\substack{k=1\\k\neq j}}^n e_k\delta_{kj}+e_i\psi_i-e_j\psi_j\right).
  \end{align*}
  Letting $\Delta_{ij}:D_{i,j}\to \Psi^{n}$, and using $\delta_{ij}=\Delta_{ij*}(1)$, the above expression is equal to 
  \begin{align*}
&\Delta_{ij*}\Delta_{ij}^*\left(\zeta_i-\zeta_j-\sum_{\substack{k=1\\k\neq i}}^n e_k\delta_{ki}+\sum_{\substack{k=1\\k\neq j}}^n e_k\delta_{kj}+e_i\psi_i-e_j\psi_j\right)=\\&\Delta_{ij*}\Delta_{ij}^*(\zeta_i-\zeta_j-e_j\delta_{ji}+e_i\delta_{ij}+e_i\psi_i-e_j\psi_j)=0,
  \end{align*}
  since $q_j\circ \Delta_{ij}=q_i\circ \Delta_{ij}$, $\Delta^*_{ij}(\delta_{ij})=c_1(\mathcal{N}_{\Delta_{ij}})$, and $\Delta_{ij}^*\psi_i=\Delta_{ij}^*\psi_j=\Delta_{ij}^*q_{j}^*\omega_{f}=c_{1}(\mathcal{N}_{\Delta_{ij}}^\vee).$
\end{rem}
\section{Universal Polynomials in $A^*(\mathrm{Pic}_g^d)$}
The pair $(f:\Psi\to S,\mathcal{L})$ determines a morphism from $S$ to the universal Picard stack $\mathrm{Pic}_g^d.$ Let $f:\mathcal{C}_g\to \mathrm{Pic}_g^d$ be the pullback of the universal curve over $\mathcal{M}_g$. It is equipped with the universal degree $d$ line bundle $\mathscr{L}$. Since the construction of the weighted bundle of principal parts commutes with base change, there is a universal weighted bundle of principal parts $\mathcal{F}_{\nu}(\mathscr{L})$ on $\mathcal{C}_{g}^n,$ such that whenever we have a Cartesian diagram
\begin{center}
    \begin{tikzcd}
\Psi^n \arrow[d] \arrow[r, "\alpha_n"] & \mathcal{C}_{g}^n \arrow[d] \\
S \arrow[r, "\alpha"']                & \mathrm{Pic}_g^d           
\end{tikzcd}
\end{center}
we have $\mathcal{F}_{\nu}(\mathcal{L})=\alpha_{n}^*(\mathcal{F}_{\nu}(\mathscr{L})).$
\subsection{Combinatorial pushforward lemma}
Let $g:\mathcal{C}_g^{n}\to \mathrm{Pic}_g^d.$
In Theorem \ref{thm1} and Theorem \ref{thm2}, our enumerative formulas are expressed in terms of 
\begin{align}\label{gmu}
g_{\nu}^i=\alpha^*f_{\nu}^i=\alpha^*g_*c_i(\mathcal{F}_{\nu}(\mathscr{L}))
\end{align}
We now describe how to compute $f_{\nu}^i$ as an explicit polynomial in the twisted kappa classes. 
By Proposition \ref{Chern formula}, $c(\mathcal{F}_{\nu}(\mathscr{L}))$ is the sum of monomials in $\zeta_i,\psi_i,\delta_{ij},$ so it suffices to give a formula for pushing forward such a monomial.
\begin{lem}\label{push forward rule}
For a monomial in $\zeta_i,\psi_i,\delta_{ij},$
    draw the multigraph $\Gamma$ on $n$ vertices, where there are $e_{ij}$ edges between $i,j$ if the monomial contains $\delta_{ij}^{e_{ij}}$. For each connected component $C\in \pi_0(\Gamma)$, let $h_1(C)$ denote the loop number of $C$. Then,  
\begin{align*}
    g_*\left(\prod_{i=1}^n \psi_i^{a_i}\zeta_i^{b_i} \prod_{1\leqslant i< j\leqslant n} \delta_{ij}^{e_{ij}}\right)&=g_*\left(\prod_{C\in \pi_0(\Gamma)}\left(\prod_{i\in C} \psi_i^{a_i}\zeta_i^{b_i} \prod_{i<j\in C} \delta_{ij}^{e_{ij}}\right)\right)\\
    &=\prod_{C\in \pi_0(\Gamma)}(-1)^{h_1(C)}\kappa_{\sum\limits_{i \in C} {a_i}+h_1(C)-1,\sum\limits_{i\in C} b_i}.
\end{align*}
\end{lem}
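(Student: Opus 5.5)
We argue by induction on the number of edges of $\Gamma$, reducing each connected component to a single vertex. The underlying geometric fact is that $g:\mathcal{C}_g^n\to\mathrm{Pic}_g^d$ is the $n$-fold fibered product of $f$. Consequently, a class that is a product of classes pulled back from disjoint groups of factors pushes forward to the product of the individual pushforwards. This gives the first equality and the factorization over components once we know each component's monomial only involves the coordinates of its own vertices.

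Precisely, for a partition $\{1,\dots,n\}=\bigsqcup_C C$ we have $\mathcal{C}_g^n=\prod_C \mathcal{C}_g^{C}$ (fibered over $\mathrm{Pic}_g^d$). Since $g$ is flat and proper, the projection formula and flat base change give $g_*(\prod_C \pi_C^*x_C)=\prod_C g_{C*}x_C$. So it suffices to treat a connected multigraph $\Gamma$ on a vertex set $C$ and show
\[g_*\Bigl(\prod_{i\in C}\psi_i^{a_i}\zeta_i^{b_i}\prod\delta_{ij}^{e_{ij}}\Bigr)=(-1)^{h_1}\kappa_{\sum a_i+h_1-1,\sum b_i}.\]
The base case is a single vertex with no edges: $f_*(\psi^a\zeta^b)=\kappa_{a-1,b}$ by definition, since $\psi=c_1(\omega_f)$ and $\zeta=c_1(\mathscr{L})$.

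For the inductive step, pick an edge $\delta_{ij}$ with $i<j$. Write $\delta_{ij}=\Delta_{ij*}(1)$ and apply the projection formula, so the monomial becomes $\Delta_{ij*}\Delta_{ij}^*(\text{rest})$. Here $D_{ij}\cong\mathcal{C}_g^{n-1}$, with vertices $i$ and $j$ merged. Under $\Delta_{ij}^*$ the classes transform as follows:
\begin{itemize}
\item $\psi_i,\psi_j\mapsto\psi_i$ and $\zeta_i,\zeta_j\mapsto\zeta_i$, because $q_i\circ\Delta_{ij}=q_j\circ\Delta_{ij}$;
\item $\delta_{jk}\mapsto\delta_{ik}$ for $k\ne i,j$, by transversality of $D_{jk}$ and $D_{ij}$ (their intersection is the triple diagonal);
\item each remaining copy of $\delta_{ij}$ maps to $c_1(\mathcal{N}_{\Delta_{ij}})=-\psi_i$, by the self-intersection formula together with $\mathcal{N}_{\Delta_{ij}}\cong q_i^*T_f$, as already used in the Remark following Proposition \ref{Chern formula}.
\end{itemize}
The net effect is to contract the chosen edge. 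The merged vertex has $a=a_i+a_j+(e_{ij}-1)$ and $b=b_i+b_j$, picks up a sign $(-1)^{e_{ij}-1}$, and the $e_{ij}-1$ parallel edges disappear. Any multi-edges created between the merged vertex and a third vertex $k$ just add, giving $\delta_{ik}^{e_{ik}+e_{jk}}$. Since $g\circ\Delta_{ij}$ is again the structure map of $\mathcal{C}_g^{n-1}$, we can apply induction.

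For the bookkeeping, contracting a non-loop edge together with its $e_{ij}-1$ parallel copies reduces $h_1=|E|-|V|+1$ by exactly $e_{ij}-1$. It also raises $\sum a$ by $e_{ij}-1$ and contributes the sign $(-1)^{e_{ij}-1}$. Hence $\sum a+h_1$ and $(-1)^{h_1}\cdot\text{sign}$ are invariant, and $\sum b$ is preserved. Contracting down to a single vertex, where $h_1=0$, yields $(-1)^{h_1}\kappa_{\sum a_i+h_1-1,\sum b_i}$ as claimed.

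The main obstacle is justifying the pullback rules rigorously on the stack $\mathcal{C}_g^n$. Specifically, we need that $\Delta_{ij}^*\delta_{jk}=\delta_{ik}$ with no excess contribution, which holds because $D_{ij}\cap D_{jk}$ is a transverse intersection of smooth divisors relative to $\mathrm{Pic}_g^d$. We also need the sign in the self-intersection $\Delta_{ij}^*\delta_{ij}=-\psi_i$. Both are checked smooth-locally over $\mathrm{Pic}_g^d$.
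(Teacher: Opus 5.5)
Your proof is correct and is essentially the paper's argument. The paper picks a spanning tree $T_C$ of each component and uses $\prod_{e\in T_C}\delta_e=\delta_C$ to push forward from the diagonal $D_\Gamma\cong\mathcal{C}_g^{|\pi_0(\Gamma)|}$ in one step, with each of the $h_1(C)$ remaining edges pulling back to $-\psi$; your edge-by-edge contraction (with components split off at the start rather than the end) does the same thing iteratively, using the same ingredients.
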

\begin{proof}
For each connected component $C\in \pi_0(\Gamma),$ we denote the diagonal in $\mathcal{C}_g^{n}$ where all points corresponding to vertices in the component $C$ agree by $D_C$, and write $\delta_{C}$ for its class. Additionally, for each connected component $C$, choose a spanning tree $T_C$. We have that $\prod_{e\in T_C}\delta_e=\delta_C.$ The number of edges of $\Gamma$ not included in $T_C$ is
   \[\abs{E(C)\setminus E(T_C)}=\abs{E(C)}-\abs{C}+1=h_1(C).\]
Let $D_{\Gamma}\cong \mathcal{C}_g^{\abs{\pi_0(\Gamma)}}$ be the diagonal where each factor corresponds to a connected component of $\Gamma$, and let $\Delta_{\Gamma}:\mathcal{C}_g^{\abs{\pi_0(\Gamma)}}\to \mathcal{C}_g^n$ be the inclusion of this diagonal.
Let $p_C:\mathcal{C}_g^{\abs{\pi_0(\Gamma)}}\to \mathcal{C}_g$ be the projection onto the factor corresponding to the connected component $C$, and let $q=g\circ \Delta_{\Gamma}.$ Using that 
\[\Delta_{\Gamma}^*(\delta_{e})=-p_C^*\psi,\]
it follows that
\begin{align*}
 g_*\left(\prod_{i=1}^n \psi_i^{a_i}\zeta_i^{b_i}\prod_{1\leqslant i< j\leqslant n}\delta_{ij}^{e_{ij}}\right)&=g_*\left(\prod_{i=1}^n \psi_i^{a_i}\zeta_i^{b_i}\prod_{C\in \pi_0(\Gamma)}\delta_C\prod_{e\in E(C)\setminus E(T_C)}\delta_e\right)\\&=
 g_*\left(\prod_{i=1}^n \psi_i^{a_i}\zeta_i^{b_i}\Delta_{\Gamma*}(1)\left(\prod_{C\in \pi_0(\Gamma)}\prod_{e\in E(C)\setminus E(T_C)}\delta_e\right)\right)\\&=g_*\Delta_{\Gamma_*}\Delta_{\Gamma}^*\left(\prod_{i=1}^n \psi_i^{a_i}\zeta_i^{b_i}\left(\prod_{C\in \pi_0(\Gamma)}\prod_{e\in E(C)\setminus E(T_C)}\delta_e\right)\right)\\&=\prod_{C\in \pi_0(\Gamma)}(-1)^{h_1(C)}g_*\Delta_{\Gamma_{*}}\left(\prod_{C\in \pi_0(\Gamma)}p_C^*\left(\psi^{\sum_{i\in C}a_i+h_1(C)}\zeta^{\sum_{i\in C}b_i}\right)\right)\\&=\prod_{C\in \pi_0(\Gamma)}(-1)^{h_1(C)}q_*\left(\prod_{C\in \pi_0(\Gamma)}p_C^*\left(\psi^{\sum_{i\in C}a_i+h_1(C)}\zeta^{\sum_{i\in C}b_i}\right)\right)\\&=\prod_{C\in \pi_0(\Gamma)}(-1)^{h_1(C)}f_*\left(\psi^{\sum_{i\in C}a_i+h_1(C)}\zeta^{\sum_{i\in C}b_i}\right)\\&=\prod_{C\in \pi_0(\Gamma)}(-1)^{h_1(C)}\kappa_{\sum\limits_{i \in C} {a_i}+h_1(C)-1,\sum\limits_{i\in C} b_i}.
\end{align*} 
\end{proof}
We will use the following proposition in Section \ref{section6}:
\begin{prop}
    \label{collorary}
    In the case that we work on the fiber product \[g:\mathcal{C}_{g}^n\to \mathrm{Pic}_{g}^{d_1}\times_{\mathcal{M}_g}\dots \times_{\mathcal{M}_g}\mathrm{Pic}_{g}^{d_\ell},\] we can compute 
    \[f_{\nu}^{i_1,\dots,i_{\ell}}=g_*(c_{i_1}(\mathcal{F}_{\nu}(\mathscr{L}_{d_1}))\dots c_{i_{\ell}}(\mathcal{F}_{\nu}(\mathscr{L}_{d_{\ell}}))).\] 
Let $\pi:\mathcal{C}_{g}\to \prod_{i=1}^{\ell}\mathrm{Pic}_{g}^{d_i}$. On $\prod_{i=1}^{\ell}\mathrm{Pic}_{g}^{d_i}$, define the mixed twisted kappa classes $\kappa_{i,j_{1},\dots,j_{\ell}}$ which generalize the standard twisted kappa classes to the fiber product: 
\[\kappa_{i,j_{1},\dots,j_{\ell}}=\pi_*\left(c_1(\omega_{\pi})^{i+1}c_{1}\left(\mathscr{L}_{1}\right)^{j_1}\dots c_{1}\left(\mathscr{L}_{\ell}\right)^{j_\ell}\right).\]
Let $q_{i,k}^*c_1(\mathscr{L}_{k})=\zeta_{i,k}$ denote the pullback under the projection \[q_{i,k}:\mathcal{C}^n\times_{\mathcal{M}_g }\prod_{i=1}^{\ell}\mathrm{Pic}_g^{d_i}\to \mathcal{C}\times_{\mathcal{M}_g}\mathrm{Pic}_g^{d_k}.\] Similarly as in Lemma \ref{push forward rule}, it follows that
\begin{align*}
g_*\left(\prod_{i=1}^n\psi_{i}^{a_i}\prod_{i=1}^n\prod_{k=1}^{\ell}\zeta_{i,k}^{b_{i,k}}\prod_{1\leqslant i<j\leqslant n}\delta_{ij}^{e_{ij}}\right)=\prod_{C\in \pi_0(\Gamma)}(-1)^{h_1(C)}\kappa_{\sum\limits_{i\in C} {a_i}+h_1(C)-1,\sum\limits_{i\in C} b_{i,1},\dots,\sum\limits_{i\in C} b_{i,\ell}}.
\end{align*}
\end{prop}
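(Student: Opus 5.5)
The plan is to run the argument of Lemma \ref{push forward rule} on the fiber product, carrying the $\ell$ line bundles through as extra labels. Write $B=\mathrm{Pic}_{g}^{d_1}\times_{\mathcal{M}_g}\dots\times_{\mathcal{M}_g}\mathrm{Pic}_{g}^{d_\ell}$. Over $B$, the family $\mathcal{C}_g\to B$ is the pullback of the universal curve over $\mathcal{M}_g$, and it carries the $\ell$ universal line bundles $\mathscr{L}_1,\dots,\mathscr{L}_\ell$. The space $\mathcal{C}_g^n$ is its $n$-fold fiber product over $B$. All the classes $\psi_i$, $\zeta_{i,k}$ and $\delta_{ij}$ are pulled back from factors or diagonals of this fiber product.

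\textbf{Step 1.} Given a monomial, form the multigraph $\Gamma$ exactly as before. For each connected component $C$, choose a spanning tree $T_C$. The diagonals $D_{ij}$ are smooth divisors, and for edges in a tree their intersection is transverse. Hence $\prod_{e\in T_C}\delta_e=\delta_C$, and so $\prod_C\delta_C=\Delta_{\Gamma*}(1)$, where $\Delta_\Gamma:\mathcal{C}_g^{|\pi_0(\Gamma)|}\to\mathcal{C}_g^n$ is the inclusion of the diagonal with one factor per component.

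\textbf{Step 2.} Apply the projection formula to pull the remaining factors back along $\Delta_\Gamma$. These factors are all the $\psi$'s and $\zeta$'s together with the $h_1(C)$ non-tree edges of each component. The pullback rules are as follows.
\begin{itemize}
\item $\Delta_\Gamma^*\psi_i=p_C^*\psi$ for $i\in C$.
\item $\Delta_\Gamma^*\zeta_{i,k}=p_C^*c_1(\mathscr{L}_k)$ for $i\in C$, because $q_{i,k}\circ\Delta_\Gamma$ factors through $p_C$.
\item $\Delta_\Gamma^*\delta_e=-p_C^*\psi$ for an edge $e$ inside $C$, by the normal bundle identification already used in the Remark after Proposition \ref{Chern formula}.
\end{itemize}
This turns the integrand into $\prod_C(-1)^{h_1(C)}p_C^*\bigl(\psi^{\sum_{i\in C}a_i+h_1(C)}\prod_k c_1(\mathscr{L}_k)^{\sum_{i\in C}b_{i,k}}\bigr)$.

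\textbf{Step 3.} The composite $g\circ\Delta_\Gamma$ is the structure map of a fiber product over $B$ of copies of $\mathcal{C}_g$. Flat base change then gives that the pushforward of an exterior product is the product of the pushforwards $\pi_*(\cdot)$. Each factor equals $\kappa_{\sum a_i+h_1(C)-1,\sum b_{i,1},\dots,\sum b_{i,\ell}}$ by the definition of the mixed twisted kappa classes.

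\textbf{Main obstacle.} The only genuinely delicate point is the one inherited from Lemma \ref{push forward rule}. One must justify that a product of diagonal classes over a non-tree edge set reduces to $\delta_C$ times pullback of excess edges. I would handle this by choosing the tree first, so that the intersection is transverse, and then treating the leftover edges purely as cohomology classes restricted to $D_C$, where $D_e\supseteq D_C$ gives the self-intersection $-\psi$.

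Nothing new is needed for the extra line bundles, since they only change which Chern roots are restricted. Finally, $f_{\nu}^{i_1,\dots,i_\ell}$ is computed by expanding each $c_{i_k}(\mathcal{F}_\nu(\mathscr{L}_{d_k}))$ with Proposition \ref{Chern formula}, using $\zeta_{i,k}$ in place of $\zeta_i$. One then multiplies the expansions and applies the monomial rule above term by term.
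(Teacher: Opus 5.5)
Your proposal is correct and matches the paper's approach. The paper proves this proposition simply by rerunning the proof of Lemma~\ref{push forward rule}: spanning trees give $\delta_C$, the projection formula is applied along $\Delta_\Gamma$, each excess edge contributes $\Delta_\Gamma^*\delta_e=-p_C^*\psi$, and the pushforward factors over components, with the classes $\zeta_{i,k}$ carried along exactly as you do. Your transversality argument for $\prod_{e\in T_C}\delta_e=\delta_C$ is a useful justification of a step the paper only asserts.
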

\subsection{Proof of Theorem \ref{thm1}}
\begin{proof}
Recall that $\mu$ is a partition of $d$ with $\mu=\left(\nu_1,\dots,\nu_n,1^{d-c(\nu)}\right)$ and $\nu_i>1$.
Consider the fiber product 
\begin{center}
    \begin{tikzcd}[row sep=30]
 \Psi^n\times_{S}\Pp(f_*\mathcal{L}) \arrow[d, "\mathrm{pr}_1"'] \arrow[r, "\mathrm{pr}_2"] & \Pp(f_*\mathcal{L}) \arrow[d, "\pi"] \\
\Psi^n \arrow[r, "g"']                                         & S                            
\end{tikzcd}
\end{center}
    and the incidence space 
    \begin{align}\label{incidence substack1}
        \widetilde{X}_{\mu}=\left\{(C,s,p_1,\dots,p_n):\mathrm{div}(s)\geqslant \sum_{i=1}^n \nu_ip_i\right\}\subseteq \Psi^n\times_S \Pp(f_*\mathcal{L})
    \end{align}
    We compute the class of $\widetilde{X}_{\mu}$ by expressing it as the vanishing locus of a map of vector bundles.
    Let $\pi:\Pp(f_*\mathcal{L})\to S$. Consider the pullback of the composition of the tautological inclusion $\mathcal{O}_{\Pp(f_*\mathcal{L})}(-1)\to \pi^*f_*\mathcal{L}$, and the evaluation map  \ref{properties} $g^*f_*\mathcal{L}\to \mathcal{F}_{\nu}(\mathcal{L}):$
    \begin{align}\label{map of vb}
        \mathrm{pr}_2^*\mathcal{O}_{\Pp(f_*\mathcal{L})}(-1)\to \mathrm{pr}_2^*\pi^*f_*\mathcal{L}\cong \mathrm{pr}_1^*g^*f_*\mathcal{L}\to \mathrm{pr}_1^*\mathcal{F}_{\nu}(\mathcal{L}).
    \end{align}
    On fibers, the above map sends $\langle s\rangle \subseteq H^0(C,\mathcal{L}|_{C})$ to its image in \[H^0(C,\mathcal{L}|_{C}\otimes \mathcal{O}_{C}/\mathcal{I}_{p_1}^{\nu_1}\dots \mathcal{I}_{p_n}^{\nu_n}),\]
    so $\widetilde{X}_{\mu}$ is the vanishing locus of the above map of vector bundles. Therefore, when $\widetilde{X}_{\mu}$ has codimension $c(\nu)$, its class is given by 
    \[[\widetilde{X}_{\mu}]=c_{\mathrm{top}}(\mathcal{O}_{\Pp(f_*\mathcal{L})}(1)\boxtimes \mathcal{F}_{\nu}(\mathcal{L}))=\sum_{i=0}^{c(\nu)}\mathrm{pr}_2^*\gamma^{c(\nu)-i}\mathrm{pr}_1^*c_i(\mathcal{F}_{\nu}(\mathcal{L})).\]
Let $m_a$ be the multiplicity of $a$ in the partition $\nu$. Then, $\mathrm{pr}_2|_{\widetilde{X}_{\mu}}$ is generically finite of degree $\prod_{a} m_a!,$ and 
   \begin{align*}
       \left(\prod_{a} m_a!\right)[X_{\mu}]=\mathrm{pr}_{2*}[\widetilde{X}_{\mu}]&=\mathrm{pr}_{2*}\left(\sum_{i=0}^{c(\nu)}\mathrm{pr}_2^*\gamma^{c(\nu)-i}\mathrm{pr}_1^*c_i(\mathcal{F}_{\nu}(\mathcal{L}))\right)\\&
       =\sum_{i=0}^{c(\nu)}\gamma^{c(\nu)-i}\mathrm{pr}_{2*}\mathrm{pr}_1^*c_i(\mathcal{F}_\nu(\mathcal{L}))\\&=\sum_{i=n}^{c(\nu)}\gamma^{c(\nu)-i}\pi^*g_*c_i(\mathcal{F}_{\nu}(\mathcal{L})) \qquad \text{(since $g$ has relative dimension $n$)}\\&=
       \sum_{i=n}^{c(\nu)}\gamma^{c(\nu)-i}\pi^*\alpha^*f_{\nu}^i \\&=\sum_{i=n}^{c(\nu)}\gamma^{c(\nu)-i}\pi^*g_{\nu}^i.
   \end{align*}
\end{proof}
\section{Specializations of the universal formulas}
\subsection{Strata of differentials}
Let $f:\mathcal{C}\to \mathcal{M}_g$ be the universal curve, and $\mathcal{L}=\omega_{f}^{\otimes k}$, which has relative degree $k(2g-2)$. The $k$-Hodge bundle is the pushforward $\mathcal{H}^k=f_*\omega_{f}^{\otimes k}.$
The stratum of unordered $k$-differentials corresponding to $\mu$ is 
\[\mathcal{P}^k(\mu)=\left\{(C,\eta)\in\Pp(\mathcal{H}^k):\text{ there exist distinct points } (p_1,\dots,p_m) \text{ such that }\mathrm{div}\,\eta=\sum_{i=1}^m\mu_ip_i\right\}.\]
The class of its closure, $\overline{\mathcal{P}^k(\mu)},$ is computed by Theorem \ref{thm1}. To compute $g_{\nu}^i,$
consider the diagram 
\begin{center}
    \begin{tikzcd}
\mathcal{C} \arrow[d, "f"'] \arrow[r, "\beta"] & \mathcal{C}_g \arrow[d, "\pi"]    \\
\mathcal{M}_g \arrow[r, "\alpha"']           & \mathrm{Pic}_g^{k(2g-2)}
\end{tikzcd}
\end{center}
The calculation below shows the twisted kappa classes pull back to multiples of the standard kappa classes: 
\begin{align*}
    \alpha^*(\kappa_{i,j})&=\alpha^*\pi_*(c_1(\omega_{\pi})^{i+1}c_1(\mathscr{L})^j)\\&=f_*\beta^*(c_1(\omega_{\pi})^{i+1}c_1(\mathscr{L})^j)\\&=f_*(c_1(\omega_{f})^{i+1}c_1(\omega_{f}^{\otimes k})^j)\\&=k^j f_*(c_1(\omega_{f}^{i+j+1}))\\&=k^j\kappa_{i+j}.
\end{align*}
\begin{rem}\label{coincident root loci and strata of differentials}
    The case $g=0$ is interesting to consider using this approach. In this case, if we consider the negative $k$-Hodge bundle, $\mathcal{H}^{-k}=f_*\omega_{f}^{\otimes -k},$ by cohomology and base change, the fibers are isomorphic to $H^0(\Pp^1,\mathcal{O}_{\Pp^1}(2k)).$ When $g=0$, $\mathcal{M}_g=\mathrm{BPGL}_2$ and the map $\GL_2\to \mathrm{PGL}_2$ induces a map of classifying stacks $\mathrm{BGL}_2\to \mathrm{BPGL}_2.$ Pulling back the universal curve under this map gives the projectivization of the universal rank $2$ vector bundle $\pi:\Pp\mathcal{W}\to \mathrm{BGL}_2.$ Taking the determinant of the Euler exact sequence 
    \begin{center}
        \begin{tikzcd}
0 \arrow[r] & \Omega_{\Pp\mathcal{W}/\mathrm{BGL}_2} \arrow[r] & \pi^*\mathcal{W}^\vee\otimes \mathcal{O}_{\Pp\mathcal{W}}(-1) \arrow[r] & \mathcal{O}_{\Pp\mathcal{W}} \arrow[r] & 0
\end{tikzcd}
    \end{center}
    yields 
    \[\omega_{\pi}=\mathcal{O}_{\Pp\mathcal{W}}(-2)\otimes \pi^*\det(\mathcal{W}^\vee),\]
    so 
    \[\mathcal{H}^{-k}|_{\mathrm{BGL}_2}=\pi_*((\mathcal{O}_{\Pp\mathcal{W}}(-2)\otimes \pi^*\det(\mathcal{W}^\vee))^{\otimes -k})=\pi_*\mathcal{O}_{\Pp\mathcal{W}}(2k)\otimes \det\mathcal{W}^{\otimes k}=\mathrm{Sym}^{2k}\mathcal{W}^\vee\otimes \det\mathcal{W}^{\otimes k}.\]
    It follows that we have an isomorphism of projective bundles
    \[\Pp(\mathcal{H}^{-k}|_{\mathrm{BGL}_2})\cong \Pp(\mathrm{Sym}^{2k}\mathcal{W}^\vee\otimes \det\mathcal{W}^{\otimes k})\cong \Pp(\mathrm{Sym}^{2k}\mathcal{W}^\vee).\]
    In this way, we see that the classes of closures of strata of $-k$ differentials in genus $0$ are equivalent to classes of $\GL_2$-equivariant coincident root loci of degree $2k$ binary forms. 
    \end{rem}    
\subsection{Coincident root loci of binary forms}
We can use Theorem \ref{thm1} to compute the $\GL_2$-equivariant classes of coincident root loci of degree $d$ binary forms in the same way as the classes of strata of differentials, but by instead pulling back along the map $\alpha:\mathrm{BGL}_2\to \mathrm{Pic}_0^d$ corresponding to $(f:\Pp\mathcal{W}\to \mathrm{BGL}_2,\mathcal{O}_{\Pp\mathcal{W}}(d)).$ 
Letting $\zeta=c_1(\mathcal{O}_{\Pp\mathcal{W}}(1)),$ we have
\begin{align}\label{pullback for coincident root loci}
    \alpha^*(\kappa_{i,j})=f_*(c_1(\omega_{f})^{i+1}c_1(\mathcal{O}_{\Pp\mathcal{W}}(d))^j)=f_*((-2\zeta-c_1)^{i+1}(d\zeta)^j).
\end{align}
In genus $0$, $\mathrm{Pic}_{0}^d$ depends on the parity of $d$. If $d$ is even, $\mathrm{Pic}_0^d$ is equivalent to $\mathrm{BPGL}_2\times \mathrm{B}\mathbb{G}_m$ (see \cite[Remark 2.1]{larson2024chowringuniversalpicard}), and when $d$ is odd, it turns out $\mathrm{Pic}_0^d$ is equivalent to $\mathrm{BGL}_2$. Indeed, when $d$ is odd, a degree $d$ line bundle $\mathcal{L}$ on a family of genus $0$ curves $f:\Psi\to S$ corresponds to the rank $2$ vector bundle $f_*(\mathcal{L}\otimes \omega_{f}^{\otimes (d-1)/2})$ on $S$. Conversely, the rank $2$ vector bundle $\mathcal{V}$ on $S$ corresponds to the family of genus $0$ curves $\Pp\mathcal{V}\to S$ and the line bundle $\mathcal{L}=\mathcal{O}_{\Pp\mathcal{V}}(1)\otimes \omega_{f}^{\otimes -(d-1)/2}.$
\begin{rem}
    The fact that $\alpha$ is an isomorphism when $d$ is odd implies that the rational Chow ring of $\mathrm{Pic}_0^d$ is generated by the $\kappa_{i,j}$. Under this isomorphism, \[\mathscr{L}\mapsto \mathcal{O}_{\Pp \mathcal{W}}(1)\otimes \omega_{\Pp\mathcal{W}/\mathrm{BGL}_2}^{-(d-1)/2}=\mathcal{O}_{\Pp \mathcal{W}}(d)\otimes \pi^*\det(\mathcal{W})^{(d-1)/2},\] and we compute that $\kappa_{0,1}=c_1$ and $\kappa_{2,0}=-2c_1^2+8c_2$. Since $A^*(\mathrm{BGL}_2)_{\Q}=\Q[c_1,c_2],$ this shows that $\kappa_{0,1}$ and $\kappa_{2,0}$ generate $A^*(\mathrm{Pic}_0^d)_{\Q}$ when $d$ is odd. Additionally, completing the square on the projective bundle relation allows us to compute all of the relations between the $\kappa_{i,j},$ and hence the entire presentation of $A^*(\mathrm{Pic}_0^d)_{\Q}$. For $g>2,$ neither the tautological subring nor the full Chow ring of $\mathrm{Pic}_g^d$ is known.
\end{rem}
\begin{rem}
    The $\GL_2$-equivariant class $[X_{\mu}]$ is equal to the projective Thom polynomial, or equivariant Poincar\'e dual of the coincident root loci, which was computed by Feh\'er, N\'emethi, and Rim\'anyi  in \cite{feher2003coincidentrootlocibinary} through a different approach. Additionally, they compute the Thom polynomial $\mathrm{Tp}_{\mu}$ of the coincident root loci, which is given by the $\gamma^0$ term of $[X_{\mu}]$ viewed as an element of $A^*(\mathrm{BGL}_2).$ Although it seems that the projective Thom polynomial of $X_{\mu}$ contains more information than the affine Thom polynomial, it turns out that one can be derived from the other, as noted in \cite{feher2005degeneracy}.
\end{rem}
\section{Equivariant Contact Problems}
\subsection{The universal $\mu$-incident line} We now apply the weighted bundles of principal parts to equivariant contact problems. Recall that $\mu$ is a partition of $d$ with $\mu=\left(\nu_1,\dots,\nu_n,1^{d-c(\nu)}\right)$ and $\nu_i>1$. Our first step in proving Theorem \ref{thm2} is to construct the universal $\mu$-incident line $\Sigma_{\mu}$ and compute its class. 
For this purpose, we introduce the marked universal $\mu$-incident line
$\widetilde{\Sigma}_{\mu}.$ Let $\mathcal{W}$ be the universal rank $r+1$ vector bundle over $\mathrm{BGL}_{r+1},$ and $\beta:\mathrm{Gr}(2,\mathcal{W})\to \mathrm{BGL}_{r+1}$. We write $\mathcal{S}\subseteq \beta^*\mathcal{W}$ for the tautological subbundle of $\mathrm{Gr}(2,\mathcal{W})$. Then, $\widetilde{\Sigma}_{\mu}$ 
is the closed substack of $\Pp(\mathrm{Sym}^d\mathcal{W}^\vee)\times_{{\BGL}_{r+1}}(\Pp\mathcal{S})^n$ whose points over a scheme $T$ are 
      \[
(T \to \Spec k) \longmapsto
\left\{
\begin{array}{l}
(\mathcal{V},\mathcal{U},\sigma_1,\dots,\sigma_n,X) \text{ where } \mathcal V
\text{ is a vector bundle of rank } r+1
\text{ on } T, \\[2pt]
\mathcal{U}\subseteq \mathcal{V} 
\text{ is a rank $2$ subbundle}, \sigma_i:T\to \Pp\mathcal{U} \text{ are sections,} \\[2pt]
X\subseteq \Pp\mathcal{V} \text{ is a degree $d$ hypersurface, and } D_{\nu}\subseteq  X\cap \Pp\mathcal{U}
\end{array}
\right\}/\sim ,\]
where the intersection denotes scheme-theoretic intersection, and $D_{\nu}=\sum_{i=1}^n\nu_iD_i,$ where $D_i$ is the divisor defined by the image of the section $\sigma_i.$ 
\begin{rem}
    $\widetilde{\Sigma}_{\mu}$ is a generalization of the substack $\widetilde{X}_{\mu}$ for coincident root loci (i.e. the relative de Jonqui\`eres loci when $f:\Pp\mathcal{W}\to \mathrm{BGL}_2, \mathcal{L}=\mathcal{O}_{\Pp\mathcal{W}}(d)$).
 The interpretation is the following: in the case $r=1$, $\mathrm{Gr}(2,r+1)=\mathrm{Gr}(2,2)=\Spec k,$ so $\mathrm{Gr}(2,\mathcal{W})=\mathrm{BGL}_2$, and since  $\mathcal{V}$ has rank $2$, there is no choice of $\mathcal{U}$ in this case. Hence, the $\GL_2$-equivariant coincident root loci are the $r=1$ case of the loci we compute for degree $d$ hypersurfaces.
\end{rem}
We now prove the formula for the class of $\Sigma_{\mu}$ in Theorem \ref{thm2} by expressing $\widetilde{\Sigma}_{\mu}$ as the zero locus of a section of a vector bundle. We let $f:\Pp\mathcal{S}\to \mathrm{Gr}(2,\mathcal{W})$ and  $\mathcal{L}=\mathcal{O}_{\Pp\mathcal{S}}(d).$
\begin{prop}\label{vanishing locus}
  Let $\gamma=c_1(\mathcal{O}(1))$ on $\Pp(\mathrm{Sym}^d\mathcal{W}^\vee)$.  $\widetilde{\Sigma}_{\mu}$ has codimension $c(\nu)$, and its class is
    \[[\widetilde{\Sigma}_{\mu}]=\sum_{i=0}^{c(\nu)} \gamma^{c(\nu)-i} c_{i}(\mathcal{F}_{\nu}(\mathcal{L}))\in A^*((\Pp\mathcal{S})^n\times_{\mathrm{BGL}_{r+1}}\Pp(\mathrm{Sym}^d\mathcal{W}^\vee)).\] 
\end{prop}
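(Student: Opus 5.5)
The plan is to realize $\widetilde{\Sigma}_{\mu}$ as the zero locus of a section of $\mathcal{O}(1)\boxtimes\mathcal{F}_{\nu}(\mathcal{L})$ on $Y\coloneq(\Pp\mathcal{S})^n\times_{\BGL_{r+1}}\Pp(\mathrm{Sym}^d\mathcal{W}^\vee)$, then use that the bundle is globally generated in a suitable sense to obtain the expected codimension. Throughout, $(\Pp\mathcal{S})^n$ denotes the $n$-fold fiber product over $\mathrm{Gr}(2,\mathcal{W})$, and $\mathcal{F}_{\nu}(\mathcal{L})$ is the weighted principal parts bundle for $f:\Pp\mathcal{S}\to\mathrm{Gr}(2,\mathcal{W})$ and $\mathcal{L}=\mathcal{O}_{\Pp\mathcal{S}}(d)$.

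\textbf{Step 1 (the section).} There is a restriction map $\beta^*\mathrm{Sym}^d\mathcal{W}^\vee\to\mathrm{Sym}^d\mathcal{S}^\vee=f_*\mathcal{O}_{\Pp\mathcal{S}}(d)$, which is surjective since $\mathcal{S}\subseteq\beta^*\mathcal{W}$ is a subbundle. Pull this back to $Y$ and compose it with:
\begin{itemize}
\item the tautological inclusion $\mathcal{O}(-1)\hookrightarrow\mathrm{Sym}^d\mathcal{W}^\vee$ on $\Pp(\mathrm{Sym}^d\mathcal{W}^\vee)$;
\item the evaluation map $g^*f_*\mathcal{L}\to\mathcal{F}_{\nu}(\mathcal{L})$ of Proposition \ref{properties}.
\end{itemize}
The result is a map $\mathcal{O}(-1)\to\mathcal{F}_{\nu}(\mathcal{L})$ on $Y$, equivalently a section of $\mathcal{O}(1)\otimes\mathcal{F}_{\nu}(\mathcal{L})$. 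On a $T$-point $(\mathcal{V},\mathcal{U},\sigma_i,X=V(F))$ this section sends $F$ to the class of $F|_{\Pp\mathcal{U}}$ in $H^0(\mathcal{L}\otimes\mathcal{O}/\prod\mathcal{I}_{D_i}^{\nu_i})$. Because $\mathcal{F}_{\nu}$ commutes with base change, its vanishing is exactly the condition $D_\nu\subseteq X\cap\Pp\mathcal{U}$. So the zero scheme equals $\widetilde{\Sigma}_\mu$ scheme-theoretically, as an equality of functors.

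\textbf{Step 2 (codimension).} This is the main point. Since the genus is $0$ and $d\geqslant c(\nu)$, so that $d-c(\nu)\geqslant-1$, Lemma \ref{surjective evaluation map} shows that $g^*f_*\mathcal{L}\to\mathcal{F}_{\nu}(\mathcal{L})$ is surjective. Composing with the surjective restriction map gives a surjection $\beta^*\mathrm{Sym}^d\mathcal{W}^\vee\to\mathcal{F}_{\nu}(\mathcal{L})$ on $(\Pp\mathcal{S})^n$, whose kernel $K$ is a vector bundle of corank $c(\nu)$. Over $(\Pp\mathcal{S})^n$, the locus $\widetilde{\Sigma}_{\mu}$ is then exactly the projective subbundle $\Pp K\subseteq\Pp(\mathrm{Sym}^d\mathcal{W}^\vee)\times(\Pp\mathcal{S})^n$, since a line $\langle F\rangle$ lies in it if and only if $F\in K$. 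Hence $\widetilde{\Sigma}_\mu$ is smooth over $(\Pp\mathcal{S})^n$, irreducible, and of codimension exactly $c(\nu)$.

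\textbf{Step 3 (the class).} Since the zero locus of a section of a rank $c(\nu)$ bundle has the expected codimension and is l.c.i., its class is the top Chern class. Expanding $c_{c(\nu)}(\mathcal{O}(1)\otimes\mathcal{F}_{\nu}(\mathcal{L}))$ with the line-bundle twisting formula gives $\sum_i\gamma^{c(\nu)-i}c_i(\mathcal{F}_\nu(\mathcal{L}))$. Alternatively, the class of $\Pp K$ can be read off directly from the projective bundle sequence; both routes give the same expression.

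I expect Step 2 to be the only step needing care. It requires checking that the very-ampleness hypothesis of Lemma \ref{surjective evaluation map} holds even when the points $\sigma_i$ collide, where the subscheme $D_\nu$ still has length $c(\nu)$ on $\Pp^1$. It also requires the stack-theoretic bookkeeping that makes these statements valid on $\BGL_{r+1}$; this reduces to an equivariant statement on a smooth approximation.
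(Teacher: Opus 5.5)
Your proposal is correct and follows the paper's proof essentially step for step. You use the same section $\mathcal{O}(-1)\to\mathcal{F}_{\nu}(\mathcal{L})$, built from the restriction $\beta^*\mathrm{Sym}^d\mathcal{W}^\vee\to\mathrm{Sym}^d\mathcal{S}^\vee$ and the evaluation map, and the same appeal to Lemma \ref{surjective evaluation map} to identify $\widetilde{\Sigma}_{\mu}$ with the projective subbundle $\Pp\mathcal{K}$ of the kernel, which gives codimension $c(\nu)$; the class then comes from the same expansion of $c_{c(\nu)}(\mathcal{O}(1)\boxtimes\mathcal{F}_{\nu}(\mathcal{L}))$. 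The collision issue you flag is already handled, since $(c(\nu)-1)$-very ampleness is a condition on all length-$c(\nu)$ subschemes, and on $\Pp^1$ it reduces to $H^1(\mathcal{O}_{\Pp^1}(d-c(\nu)))=0$, which holds because $c(\nu)\leqslant d$.
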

\begin{proof}
  Consider the diagram  \begin{center}
\begin{tikzcd}[row sep=50]
(\Pp\mathcal{S})^n\times_{\mathrm{BGL}_{r+1}}\Pp(\mathrm{Sym}^d\mathcal{W}^\vee) \arrow[d, "\widetilde{\pi}_2"'] \arrow[r, "\widetilde{\pi}_1"]                 & \Pp(\mathrm{Sym}^d\mathcal{W}^\vee) \arrow[d, "\widetilde{\varphi}_2"] \\
{(\Pp\mathcal{S})^n} \arrow[r, "\widetilde{\varphi}_1"'] & \mathrm{BGL}_{r+1}                                     
\end{tikzcd}
\end{center}
and note that $\widetilde{\varphi}_1$ factors as 
\begin{center}
    \begin{tikzcd}
(\Pp\mathcal{S})^n \arrow[r, "g"] & {\mathrm{Gr}(2,\mathcal{W})} \arrow[r, "\beta"] & \mathrm{BGL}_{r+1}.
\end{tikzcd}
\end{center}
We have the tautological inclusion $\mathcal{S}\hookrightarrow \beta^*\mathcal{W}.$ Dualizing and taking $\mathrm{Sym}$ gives us a surjection $\beta^*\mathrm{Sym}^d\mathcal{W}^\vee \to \mathrm{Sym}^d\mathcal{S}^\vee=f_*\mathcal{L}.$ We consider the composition
\[\widetilde{\pi}_1^*\mathcal{O}(-1)\to \widetilde{\pi}_1^*\widetilde{\varphi}_2^*\mathrm{Sym}^d\mathcal{W}^\vee\cong \widetilde{\pi}_2^*\widetilde{\varphi}_1^*\mathrm{Sym}^d\mathcal{W}^\vee\to \widetilde{\pi}_2^*g^*f_*\mathcal{L}\to \widetilde{\pi}_2^*\mathcal{F}_{\nu}(\mathcal{L}),\]
which on fibers sends $f\mapsto f|_{L}\mapsto f|_{D_\nu}.$ Hence, $\widetilde{\Sigma}_{\mu}$ is the vanishing locus of this map, so when it has expected codimension, its class is given by 
\begin{align*}
    [\widetilde{\Sigma}_{\mu}]&=c_{c(\nu)}(\mathcal{O}(1)\boxtimes \mathcal{F}_{\nu}(\mathcal{L}))\\&=\sum_{i=0}^{c(\nu)}\gamma^{c(\nu)-i}c_i(\mathcal{F}_{\nu}(\mathcal{L})).
\end{align*}
We now show $\widetilde{\Sigma}_{\mu}$ always has expected codimension.
By Lemma \ref{surjective evaluation map}, $\widetilde{\pi}_2^*\widetilde{\varphi}_1^*\mathrm{Sym}^d\mathcal{W}^\vee\to \widetilde{\pi}_2^*\mathcal{F}_{\nu}(\mathcal{L})$ is surjective. Let $\widetilde{\pi}_2^*\mathcal{K}$ be the kernel of this surjection. Then, $\widetilde{\Sigma}_{\mu}=\Pp\mathcal{K}$. By \cite[Proposition 9.13]{eisenbud20163264},  
\[\codim \Pp\mathcal{K}=\mathrm{rk}(\widetilde{\varphi}_1^*\mathrm{Sym}^{d}\mathcal{W}^\vee)-\mathrm{rk}(\mathcal{K})=\mathrm{rk}(\mathcal{F}_{\nu}(\mathcal{L}))=c(\nu).\]
\end{proof}
\begin{cor}\label{universal mu incident}
     Let $m_a$ be the multiplicity of $a$ in the partition $\nu$. The class of the universal $\mu$-incident line is 
    \[[\Sigma_{\mu}]={\frac{1}{\prod_{a}m_a!}}\sum_{i=0}^{c(\nu)}\gamma^{c(\nu)-i}g_{\nu}^i.\]
\end{cor}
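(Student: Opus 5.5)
We push forward the class of $\widetilde{\Sigma}_{\mu}$ from Proposition \ref{vanishing locus} along the forgetful map that drops the $n$ marked points, following the same pattern as the proof of Theorem \ref{thm1}. Write
\[h=g\times \mathrm{id}:(\Pp\mathcal{S})^n\times_{\BGL_{r+1}}\Pp(\mathrm{Sym}^d\mathcal{W}^\vee)\to \mathrm{Gr}(2,\mathcal{W})\times_{\BGL_{r+1}}\Pp(\mathrm{Sym}^d\mathcal{W}^\vee).\]
This map is the base change of $g:(\Pp\mathcal{S})^n\to \mathrm{Gr}(2,\mathcal{W})$ along $\pi_2$. By construction, $h(\widetilde{\Sigma}_{\mu})=\Sigma_{\mu}$.

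The first step is to show that $h|_{\widetilde{\Sigma}_{\mu}}:\widetilde{\Sigma}_{\mu}\to\Sigma_{\mu}$ is generically finite of degree $\prod_a m_a!$. Take a general point $(L,X)$ of $\Sigma_{\mu}$, meaning $X\cap L=\sum\mu_ip_i$ with distinct points and exact multiplicities $\mu$.
\begin{itemize}
\item The preimages are the orderings $(p_1,\dots,p_n)$ of the points with $\nu_i>1$ such that $p_i$ carries multiplicity $\nu_i$.
\item The only freedom is permuting points that carry equal weights, which gives exactly $\prod_a m_a!$ preimages.
\item Generic reducedness of the fiber follows from $\widetilde{\Sigma}_{\mu}=\Pp\mathcal{K}$ being a projective bundle over the smooth stack $(\Pp\mathcal{S})^n$, hence integral.
\end{itemize}
With this we get $h_*[\widetilde{\Sigma}_{\mu}]=\left(\prod_a m_a!\right)[\Sigma_{\mu}]$, where $\Sigma_{\mu}$ is taken as the closure of the image.

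The main obstacle is checking that the general point of $\widetilde{\Sigma}_{\mu}$ really has exact contact $\mu$, rather than lying in a degenerate stratum. The degenerate strata are:
\begin{itemize}
\item marked points colliding;
\item a hypersurface vanishing to higher order at some $p_i$, or at extra points;
\item $X\supseteq L$.
\end{itemize}
Each of these is the projectivization of a proper subbundle of $\mathcal{K}$, or lies over a diagonal of $(\Pp\mathcal{S})^n$. Hence each is a proper closed subset of the irreducible $\Pp\mathcal{K}$.

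Next, compute the pushforward using the projection formula. The class $\gamma$ is pulled back from $\Pp(\mathrm{Sym}^d\mathcal{W}^\vee)$ through $h$. Since the square is Cartesian and flat, we have $h_*\widetilde{\pi}_2^*=\pi_2^*g_*$. Therefore
\[h_*[\widetilde{\Sigma}_{\mu}]=\sum_{i=0}^{c(\nu)}\gamma^{c(\nu)-i}\,\pi_2^*g_*c_i(\mathcal{F}_{\nu}(\mathcal{L})).\]
The terms with $i<n$ vanish because $g$ has relative dimension $n$; we keep the sum from $0$ to match the statement.

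It remains to identify $g_*c_i(\mathcal{F}_{\nu}(\mathcal{L}))$ with $g_\nu^i=\alpha^*f_\nu^i$. By the base change property of $\mathcal{F}_{\nu}$, we have $\mathcal{F}_{\nu}(\mathcal{L})=\alpha_n^*\mathcal{F}_{\nu}(\mathscr{L})$. Flat base change for the Cartesian square formed by $(\Pp\mathcal{S})^n\to\mathcal{C}_0^n$ over $\alpha$ then gives $g_*\alpha_n^*=\alpha^*g_*$. Dividing by $\prod_a m_a!$ yields the stated formula for $[\Sigma_{\mu}]$.
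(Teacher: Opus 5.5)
Your proposal is correct and follows the same route as the paper. The paper's proof only asserts that $(g\times\mathrm{id})|_{\widetilde{\Sigma}_{\mu}}$ is generically finite of degree $\prod_a m_a!$ and then invokes the projection formula and flat base change; your preimage count, the irreducibility of $\widetilde{\Sigma}_{\mu}=\Pp\mathcal{K}$, and the identification $g_*c_i(\mathcal{F}_{\nu}(\mathcal{L}))=\alpha^*f_{\nu}^i$ supply the details it leaves implicit.

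One minor imprecision: the stratum where $X\cap L$ has an extra multiple point away from the $p_i$ is not a projectivized subbundle of $\mathcal{K}$. It is instead the image of such a bundle over $(\Pp\mathcal{S})^{n+1}$, of dimension $\dim\Pp\mathcal{K}-1$. A cleaner fix is to note that exact contact $\mu$ is an open condition, and it is attained over any point with distinct $p_i$ by lifting a form on $L$ with divisor $\sum_i\nu_ip_i$ plus $d-c(\nu)$ further distinct points; hence it holds on a dense open subset of the irreducible $\Pp\mathcal{K}$.
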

\begin{proof}
    We have that $(g\times \mathrm{id})|_{\widetilde{\Sigma}_{\mu}}$ is generically finite of degree $\prod_{a}m_a!$, and 
    \[[\Sigma_{\mu}]=\frac{1}{\prod_{a}m_a!}(g\times \mathrm{id})_*[\widetilde{\Sigma}_{\mu}].\] Then, the result follows from the push-pull formula and flat base change.
\end{proof}
The following lemma shows that our formula for $[S_{\mu}]$ in Theorem \ref{thm2} always computes the class of the locus of $\mu$-incident lines.
\begin{lem}\label{fiber dim}
    In the case of nonnegative expected dimension, a general fiber of $\pi_1|_{\Sigma_{\mu}}$ has dimension $d(\mu)$ if and only if the expected formula for $[S_{\mu}]$ is non-zero.
\end{lem}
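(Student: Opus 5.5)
The idea is to study the fibers of $\pi_1|_{\Sigma_\mu}\colon \Sigma_\mu\to \Pp(\mathrm{Sym}^d\mathcal{W}^\vee)$ by generic flatness and dimension counting, and to identify the expected formula for $[S_\mu]$ with the restriction of $[\Sigma_\mu]$ to a general fiber. The first step fixes dimensions. By Proposition \ref{vanishing locus}, $\widetilde{\Sigma}_\mu=\Pp\mathcal{K}$ is a projective bundle over $(\Pp\mathcal{S})^n$, so it is irreducible of codimension $c(\nu)$. Since $(g\times\mathrm{id})|_{\widetilde\Sigma_\mu}$ is generically finite, $\Sigma_\mu$ is irreducible of dimension
\[
\dim \Pp(\mathrm{Sym}^d\mathcal{W}^\vee)+2(r-1)+n-c(\nu)=\dim\Pp(\mathrm{Sym}^d\mathcal{W}^\vee)+d(\mu),
\]
relative to $\BGL_{r+1}$. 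Upper semicontinuity of fiber dimension then splits the argument into two cases. Either $\pi_1|_{\Sigma_\mu}$ is dominant, in which case a general fiber has dimension exactly $d(\mu)$. Or it is not dominant, in which case a general fiber is empty.

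The second step identifies the formula. Pass to the non-equivariant setting by restricting to a fiber of $\BGL_{r+1}$, which sets $c_i=0$. For a general point $[X]$, I would compute $[S_\mu(X)]$ as the pullback of $[\Sigma_\mu]$ along the inclusion $\mathrm{Gr}(2,r+1)\times\{[X]\}\hookrightarrow \mathrm{Gr}(2,r+1)\times\Pp H^0(\mathcal{O}(d))$. The class $\gamma$ restricts to $0$ there, so only the $\gamma^0$ term survives. That term is $\frac{1}{\prod m_a!}g_\nu^{c(\nu)}|_{c_i=0}$. This is justified by a Kleiman-type or generic-flatness statement: the refined Gysin pullback to a general fiber of a flat-over-a-dense-open map computes the fundamental class of that fiber. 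When the map is dominant, the general fiber has the expected dimension $d(\mu)$. Generic smoothness in characteristic zero, or the identification of the fiber with a degeneracy locus of the correct codimension, ensures the multiplicity agrees, so $[S_\mu(X)]$ equals this class. This class is nonzero because $S_\mu(X)$ is a nonempty subvariety of the projective variety $\mathrm{Gr}(2,r+1)$ and its class is effective of the right dimension. That proves the forward direction.

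For the converse, I would argue by contrapositive. Suppose $\pi_1|_{\Sigma_\mu}$ is not dominant. Then the image $V=\pi_1(\Sigma_\mu)$ is a proper closed subset. The class $[\Sigma_\mu]$ is pushed forward from a cycle supported over $V$. Restricting to a point $[X]\notin V$ is therefore a Gysin pullback of a cycle that does not meet the fiber, so it is zero. By the computation in the second step, this restriction is exactly the expected formula $\frac{1}{\prod m_a!}g_\nu^{c(\nu)}|_{c_i=0}$, which therefore vanishes.

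The main obstacle is the multiplicity statement in the dominant case: showing that the general fiber is reduced, or at least that the Gysin restriction equals the reduced fiber class rather than a multiple. My first attempt would be to invoke generic smoothness of $\Sigma_\mu^{\mathrm{sm}}\to\Pp(\mathrm{Sym}^d)$ in characteristic zero, together with the smoothness of $\widetilde\Sigma_\mu$ as a projective bundle over the smooth $(\Pp\mathcal S)^n$. Strictly, the lemma's "if and only if" needs only the nonvanishing, and that follows from effectivity even with multiplicities.
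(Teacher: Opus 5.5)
Your proof is correct, but it gets the converse from a different mechanism than the paper.

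\textbf{The paper's route.} The paper works non-equivariantly on $(\Pp\mathcal{S})^n$. Suppose some $F$ admitted no $\mu$-incident configuration. Then the induced section $\sigma_F$ of $\mathcal{F}_\nu(\mathcal{L})$ would be nowhere vanishing, so $\mathcal{F}_\nu(\mathcal{L})$ would contain a trivial line subbundle with quotient of rank $c(\nu)-1$. This forces $c_{c(\nu)}(\mathcal{F}_\nu(\mathcal{L}))=0$, hence $g_\nu^{c(\nu)}|_{c_i=0}=0$. So nonvanishing gives surjectivity of $\pi_1|_{\Sigma_\mu}$, and the fiber dimension theorem finishes.

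\textbf{Your route.} You localize on $\mathrm{Gr}(2,r+1)\times\Pp H^0(\mathcal{O}(d))$ instead. Since $\gamma\mapsto 0$, the expected formula is the restriction of $[\Sigma_\mu]$ to $\mathrm{Gr}(2,r+1)\times\{x\}$. For $x$ outside the proper closed image, the refined pullback lies in $A_*(\emptyset)=0$.

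\textbf{Comparison.} Both arguments are instances of the same localization principle. The paper's version is more self-contained. It does not use the formula of Corollary~\ref{universal mu incident}, so it does not need the symmetrization degree $\prod_a m_a!$. It also needs neither generic flatness nor refined Gysin maps, and it shows directly that every $F$ has a $\mu$-incident line.

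Your version treats both directions uniformly. In the forward direction it also supplies something the paper leaves implicit. The paper only notes that $S_\mu(X)\neq\emptyset$ and that $\mathrm{Gr}(2,r+1)$ is projective, tacitly identifying $[S_\mu(X)]$ with $g_\nu^{c(\nu)}|_{c_i=0}$. Your generic-flatness identification $x^![\Sigma_\mu]=[(\pi_1|_{\Sigma_\mu})^{-1}(x)]$ shows the expected formula is a nonzero effective cycle of dimension $d(\mu)$ on a projective variety. As you correctly observe, multiplicities play no role in the nonvanishing.
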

\begin{proof}
Suppose a general fiber of $\pi_1|_{\Sigma_{\mu}}$ is non-empty, and hence of dimension $d(\mu)$. Then, there exists a $\mu$-incident line, so $S_{\mu}\neq \emptyset.$ Since $\mathrm{Gr}(2,r+1)$ is projective, $[S_{\mu}]\neq 0$. Conversely, suppose that \[[S_{\mu}]=g_{\nu}^{c(\nu)}|_{c_i=0}\neq 0.\]
In particular, this means that $g_{\nu}^{c(\nu)}\neq 0$, which we claim implies that $\pi_1|_{\Sigma_{\mu}}$ is surjective. Indeed, by \cite[Tag 04ZR]{stacks-project}, we may consider $\pi_1|_{\Sigma_\mu}$ before the $\GL_{r+1}$ quotient. Given $F\in H^0(\mathcal{O}_{\Pp^{r}}(d))$, we get an associated section $\sigma_{F}\in H^0(\mathcal{F}_{\nu}(\mathcal{L})).$ Suppose for contradiction that there exists $F$ such that $V(\sigma_F)=\emptyset.$  Then, $\sigma_F$ is nowhere vanishing, and we have the exact sequence 
\begin{center}
    \begin{tikzcd}
0 \arrow[r] & \mathcal{O}_{(\Pp\mathcal{S})^n} \arrow[r, "1\mapsto \sigma_F"] & \mathcal{F}_{\nu}(\mathcal{L}) \arrow[r] & \mathcal{Q} \arrow[r] & 0
\end{tikzcd}
\end{center}
By the Whitney formula, $c(\mathcal{F}_{\nu}(\mathcal{L}))=c(\mathcal{Q}),$ and since $\mathrm{rk}\,\mathcal{Q}=c(\nu)-1$, the top Chern class of $\mathcal{F}_{\nu}(\mathcal{L}),$ and therefore its pushforward by $g$, is zero, yielding a contradiction. Knowing $\pi_1|_{\Sigma_{\mu}}$ is surjective, we may apply the fiber dimension theorem \cite[Tag 05F6]{stacks-project}, which gives us that a general fiber has dimension $d(\mu).$
\end{proof}
\subsection{Proof of Theorem~\ref{thm2}}
\begin{proof}
For the first part, suppose $d(\mu)<0.$ From the formula for $[\Sigma_{\mu}]$ in Corollary \ref{universal mu incident} it follows that
\begin{align*}
   \left(\prod_{a}m_a!\right)[V_{\mu}]&= \left(\prod_{a}m_a!\right)\pi_{1*}[\Sigma_{\mu}]\\&=\pi_{1*}\left(\sum_{i=0}^{c(\nu)}\pi_1^*\gamma^{c(\nu)-i}\pi_2^*g_{\nu}^i\right)\\&=\sum_{i=0}^{c(\nu)}\gamma^{c(\nu)-i}\pi_{1*}\pi_2^*g_{\nu}^i\\&=\sum_{i=0}^{c(\nu)}\gamma^{c(\nu)-i}\varphi_2^*\varphi_{1*}g_{\nu}^i\\&=\sum_{i=0}^{c(\nu)}\gamma^{c(\nu)-i}\beta_*g_{\nu}^i.
\end{align*}
For the second part, suppose $d(\mu)\geqslant 0,$ and let $N=\mathrm{rk}(\mathrm{Sym}^d\mathcal{W}^\vee)-1.$ The locus $S_{\mu}(X)$ fits into the following diagram 
\begin{center}
\begin{tikzcd}[row sep= 45]
S_{\mu}(X) \arrow[d, hook]                                                & \Sigma_{\mu} \arrow[d, hook]                                                                                  &                                        \\
{\mathrm{Gr}(2,r+1)} \arrow[d] \arrow[r] \arrow[rr, dashed, bend left, "\tau"] & {\Pp(\mathrm{Sym}^d\mathcal{W}^\vee)\times_{\mathrm{BGL_{r+1}}}\mathrm{Gr}(2,\mathcal{W})} \arrow[d] \arrow[r] & {\mathrm{Gr}(2,\mathcal{W})} \arrow[d] \\
\Spec k \arrow[r]                                                      & \Pp(\mathrm{Sym}^d\mathcal{W}^\vee) \arrow[r]                                                                 & \mathrm{BGL}_{r+1}                    
\end{tikzcd}
\end{center}
where $\Spec k$ is the point corresponding to a  general degree $d$ hypersurface $X=V(F)$. Let $\tau:\mathrm{Gr}(2,r+1)\to \mathrm{Gr}(2,\mathcal{W})$. We have that $\pi_1^{-1}(F)$ is empty or has expected codimension. Hence, by  Lemma \ref{fiber dim}, its class is given by
\begin{align*}
    \left(\prod_{a}m_a!\right)[\pi_{1}^{-1}(F)]&= \left(\prod_{a}m_a!\right)\pi_{2*}([\Sigma_{\mu}]\cdot \pi_1^*\gamma^{N})\\&=\pi_{2*}\left(\sum_{i=0}^{c(\nu)}\pi_1^*\gamma^{c(\nu)-i+N}\pi_2^*g_{\nu}^i\right)\\&=\sum_{i=0}^{c(\nu)}\pi_{2*}\pi_1^*\gamma^{c(\nu)-i+N}g_{\nu}^i\\&=\sum_{i=0}^{c(\nu)}\varphi_1^*\varphi_{2*}\gamma^{c(\nu)-i+N}g_{\nu}^i.
\end{align*}
Then, the class of $\mu$-incident lines is
\begin{align*}
    \left(\prod_{a}m_a!\right)[S_{\mu}(X)]&=\left(\prod_{a}m_a!\right)\tau^*[\pi_{1}^{-1}(F)]\\&= \tau^*\left(\sum_{i=0}^{c(\nu)}\varphi_1^*\varphi_{2*}\gamma^{c(\nu)-i+N}g_{\nu}^i\right)\\&=g_{\nu}^{c(\nu)}|_{c_i=0}.
\end{align*}
\end{proof}
We note that in the above formulas, $g_{i}^\nu=\alpha^*f_{i}^{\nu}$ is determined by the pullback formula
\begin{align}\label{pullback grassmannian}
    \alpha^*(\kappa_{i,j})=f_*(c_1(\omega_{f})^{i+1}c_1(\mathcal{O}_{\Pp\mathcal{S}}(d))^j)=f_*((-2\zeta+\sigma_1)^{i+1}(d\zeta)^j),
\end{align}
where $\zeta=c_1(\mathcal{O}_{\Pp\mathcal{S}}(1)).$
\begin{ex}
Let $d=4$, $\mu=\nu=(2,2)$ and $r=2$, so $d(\mu)=0.$
From Theorem \ref{thm2}, we have 
\[[S_{\mu}(X)]=\frac{1}{2}g_{\nu}^4|_{c_i=0}.\]
Applying the formula in Proposition \ref{Chern formula} for the Chern classes of $\mathcal{F}_{\nu}(\mathscr{L})$ followed by the pushforward formula in Lemma
\ref{push forward rule} we get 
\[f_{\nu}^4=(\kappa_{-1,2}+\kappa_{0,1})^2-4\kappa_{-1,3}-10\kappa_{0,2}-6\kappa_{1,1}.\]
By \eqref{pullback grassmannian}, we have 
\[g_{\nu}^4=24\sigma_{1}^2+32\sigma_{1,1}=56\sigma_{1,1},\]
so
\[[S_{\mu}(X)]=28\sigma_{1,1}.\]
Indeed, the degree of $S_{\mu}(X)$ computes the classical $28$ bitangents on a smooth plane quartic.
\end{ex}
\subsection{Generalization to complete intersections}\label{section6}
We can extend Theorem \ref{thm2} to complete intersections in projective space. In the case of a degree $d$ hypersurface, any line meets the hypersurface at $d$ points, counted with multiplicity. However, the same does not apply to complete intersections of codimension higher than $1$. Hence, in this subsection, we work with $\mathcal{F}_{\mu}(\mathcal{L})$, where $\mu=(\mu_1,\dots,\mu_m)$ is any tuple of positive integers satisfying $c(\mu)\leqslant d_j+1$ for each degree $d_j$ of the complete intersection.

Let $X\subseteq \Pp^r$ be a complete intersection of degrees $\boldsymbol{d}=(d_1,\dots,d_k)$ with $d_1\leqslant d_2\leqslant \dots\leqslant d_k$. We write 
\[\boldsymbol{d}=(d_1,\dots,d_k)=(\underbrace{d_1',\dots,d_1'}_{k_1 \text{ times }},\underbrace{d_2',\dots,d_2'}_{k_2 \text{ times }},\dots,\underbrace{d_\ell',\dots,d_\ell'}_{k_\ell \text{ times }}).\] We replace the moduli space of degree $d$ hypersurfaces with the moduli space of type $\boldsymbol{d}$ complete intersections $\mathcal{M}^{\GL_{r+1}}(\boldsymbol{d})$, which is constructed through a tower of Grassmann bundles and open immersions: 
\begin{center}
    \begin{tikzcd}
{\mathrm{Gr}(k_\ell,\mathcal{E}_{\ell})} \arrow[r] & {}_{\vdots} \arrow[d]    &                                            &                                            \\
              & U_2 \arrow[r, hook] & {\mathrm{Gr}(k_2,\mathcal{E}_2)} \arrow[d] &                                            \\
                  &                     & U_1 \arrow[r, hook]                        & {\mathrm{Gr}(k_1,\mathcal{E}_1)} \arrow[d] \\
        &                     &                                            & \Spec k                                   
\end{tikzcd}
\end{center}
Then, 
\[\mathcal{M}^{\GL_{r+1}}(\boldsymbol{d})=[\mathrm{Gr}(k_{\ell},\mathcal{E}_{\ell})/\GL_{r+1}].\]
Here, the bundle $\mathcal{E}_{i}$ is constructed such that the fiber over a $k_{i-1}$-dimensional subspace $\langle f_1,\dots,f_{k_{i-1}}\rangle\subset (\mathcal{E}_{i-1})_x$ is degree $d_i'$ polynomials modulo degree $(d_i'-d_{j}')$ multiples of the equations chosen at the preceding stages $j<i.$ For a detailed construction of the moduli space of complete intersections, see \cite[Section 1]{dilorenzo2022intersectiontheorymodulismooth}. We extend the universal $\mu$-incident line  $\Sigma_{\mu}$ to the moduli space of complete intersections. Proposition \ref{vanishing locus} extends in the following way:
\begin{thm}\label{complete intersections}
     Let $\mathcal{T}_i$ denote the pullback of the tautological subbundle of $[\mathrm{Gr}(k_i,\mathcal{E}_i)/\GL_{r+1}]$ to $\mathcal{M}^{\GL_{r+1}}(\boldsymbol{d}),$ and let $\mathcal{L}_i=\mathcal{O}_{[\Pp\mathcal{S}/\GL_{r+1}]}(d_i').$ The class of $\widetilde{\Sigma}_{\mu}$ is given by the formula
    \[[\widetilde{\Sigma}_{\mu}]=\prod_{i=1}^\ell c_{\mathrm{top}}(\mathcal{T}_{i}^\vee\boxtimes \mathcal{F}_{\mu}(\mathcal{L}_{i})).\]
\end{thm}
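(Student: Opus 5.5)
We mimic the proof of Proposition \ref{vanishing locus}, replacing the single tautological line $\mathcal{O}(-1)$ on $\Pp(\mathrm{Sym}^d\mathcal{W}^\vee)$ by the tautological subbundles $\mathcal{T}_i$ of the Grassmann tower. First we set up the ambient space $(\Pp\mathcal{S})^n\times_{\mathrm{BGL}_{r+1}}\mathcal{M}^{\GL_{r+1}}(\boldsymbol{d})$ with projections $\widetilde{\pi}_2$ to $(\Pp\mathcal{S})^n$ and $\widetilde{\pi}_1$ to $\mathcal{M}^{\GL_{r+1}}(\boldsymbol{d})$. We then define $\widetilde{\Sigma}_{\mu}$ as the locus where $D_\mu=\sum \mu_i D_i\subseteq X\cap \Pp\mathcal{U}$. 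Since $\Pp\mathcal{U}$ is a line, this says that every equation of $X$ restricts to zero in $H^0(\mathcal{O}_{\Pp\mathcal{U}}(d_j)|_{D_\mu})$. Write the equations stage by stage: at stage $i$ the $k_i$ equations of degree $d_i'$ form the fiber of $\mathcal{T}_i\subseteq \mathcal{E}_i$.

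The key step is to build, for each $i$, a map of vector bundles
\[\widetilde{\pi}_1^*\mathcal{T}_i\to \widetilde{\pi}_2^*\mathcal{F}_{\mu}(\mathcal{L}_i),\]
which sends an equation to its restriction to $D_\mu$. The subtlety is that $\mathcal{E}_i$ is a quotient of $\mathrm{Sym}^{d_i'}\mathcal{W}^\vee$ by degree $d_i'-d_j'$ multiples of the earlier equations, so restriction is not a priori well defined on $\mathcal{T}_i$. We address this by working on the locus where the earlier equations already vanish on $D_\mu$. There the multiples $h\cdot f_j$ with $j<i$ restrict to zero in $\mathcal{F}_\mu(\mathcal{L}_i)$, since the ideal $\mathcal{I}_{D_\mu}$ absorbs them. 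Hence the restriction map factors through $\mathcal{E}_i$.

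This suggests an inductive argument along the tower. Let $Z_i$ be the locus where the stage $1,\dots,i$ maps vanish, with $Z_i$ the zero locus of a section of $\mathcal{T}_i^\vee\boxtimes\mathcal{F}_\mu(\mathcal{L}_i)$ restricted to $Z_{i-1}$. Each such section is the composition $\mathcal{T}_i\to\mathrm{Sym}^{d_i'}\mathcal{S}^\vee$-quotient $\to \mathcal{F}_\mu(\mathcal{L}_i)$, built using Proposition \ref{properties}(2). Then $[Z_i]=c_{\mathrm{top}}(\mathcal{T}_i^\vee\boxtimes\mathcal{F}_\mu(\mathcal{L}_i))\cdot[Z_{i-1}]$, provided $Z_i$ has the expected codimension $k_i\,c(\mu)$ in $Z_{i-1}$. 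Multiplying over the stages gives the stated product formula. Two cleaner variants are possible. One can extend the map globally, defining it via the lift to $\mathrm{Sym}^{d_i'}$ modulo the images of the earlier $\mathcal{T}_j$, which land in a subbundle one quotients by. Alternatively, one can note that the Chern class products of pullbacks commute with restriction.

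The main obstacle is the codimension count, which is needed to identify the classes with top Chern classes. We would argue as in Proposition \ref{vanishing locus}. By Lemma \ref{surjective evaluation map} and the hypothesis $c(\mu)\leqslant d_j+1$, the evaluation $\mathrm{Sym}^{d_i'}\mathcal{S}^\vee\to\mathcal{F}_\mu(\mathcal{L}_i)$ is surjective. So over a fixed point of $(\Pp\mathcal{S})^n$, the condition at stage $i$ that the $k_i$-plane $\mathcal{T}_i$ lie in the kernel of a surjection of the fiber of $\mathcal{E}_i$ onto a rank $c(\mu)$ space cuts out a sub-Grassmann bundle of codimension $k_i c(\mu)$. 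This requires checking that the induced map from $\mathcal{E}_i$ (over $Z_{i-1}$) to $\mathcal{F}_\mu(\mathcal{L}_i)$ remains surjective. The cokernel issue is that multiples of earlier equations are killed, but these already map to zero, so surjectivity is inherited from $\mathrm{Sym}^{d_i'}\mathcal{W}^\vee$. Granting this, $\widetilde{\Sigma}_\mu$ is a tower of Grassmann subbundles, hence of the expected codimension $\sum_i k_i c(\mu)$. The vanishing-locus description then gives the product of top Chern classes, and the open immersions $U_i\hookrightarrow\mathrm{Gr}$ are harmless by restriction.
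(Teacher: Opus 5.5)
Your proposal is correct and is essentially the paper's argument. You define the restriction maps $\widetilde{\pi}_1^*\mathcal{T}_i\to\widetilde{\pi}_2^*\mathcal{F}_{\mu}(\mathcal{L}_i)$ only on the zero locus $Z_{i-1}$ of the previous stage, collect one factor $c_{\mathrm{top}}(\mathcal{T}_i^\vee\boxtimes\mathcal{F}_{\mu}(\mathcal{L}_i))$ per stage via the projection formula, and obtain the expected codimension $c(\mu)\sum_j k_j$ from the surjectivity in Lemma \ref{surjective evaluation map}, as in Proposition \ref{vanishing locus}; your sub-Grassmann-bundle count spells out what the paper leaves implicit.

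Drop the aside about extending the map globally. The image of the multiples of earlier equations in $\mathcal{F}_{\mu}(\mathcal{L}_i)$ need not have constant rank, so it is not a subbundle, and your inductive argument does not need it anyway.
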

\begin{proof}[Proof Sketch]
We iteratively consider the maps 
$\pi_1^*\mathcal{T}_i\to \pi_2^*\mathcal{F}_{\mu}(\mathcal{L}_i)$
which are well defined when restricted to the vanishing locus $\mathcal{Z}_{i-1}$ of the previous map. Let $\alpha:\mathcal{Z}_{\ell-1}\to 
(\Pp\mathcal{S})^m\times_{\mathrm{BGL}_{r+1}}\mathcal{M}^{\GL_{r+1}}(\boldsymbol{d})$.
Inductively, we have that when $\widetilde{\Sigma}_{\mu}$ has expected codimension,
\[[\widetilde{\Sigma}_{\mu}]=\alpha_*\alpha^*c_{\mathrm{top}}(\mathcal{T}_{\ell}^\vee\boxtimes \mathcal{F}_{\mu}(\mathcal{L}_{\ell}))=[\mathcal{Z}_{\ell-1}]\cdot c_{\mathrm{top}}(\mathcal{T}_{\ell}^\vee\boxtimes \mathcal{F}_{\mu}(\mathcal{L}_{\ell}))=\prod_{i=1}^\ell c_{\mathrm{top}}(\mathcal{T}_{i}^\vee\boxtimes \mathcal{F}_{\mu}(\mathcal{L}_{i})).\]
Since $c(\mu)\leqslant d_j'+1$ for $1\leqslant j\leqslant \ell,$ an argument similar to Proposition \ref{vanishing locus} shows that indeed, $\widetilde{\Sigma}_{\mu}$ achieves expected codimension 
\[\codim \widetilde{\Sigma}_{\mu}=c(\mu)\sum_{j=1}^{\ell}k_j.\] 
\end{proof}
We note that the expected fiber dimension of $\pi_1|_{\Sigma_{\mu}}$ is therefore
\[d_{\mathrm{CI}}(\mu)=2(r-1)+m-c(\mu)\sum_{j=1}^{\ell}k_j.\]
The classes $[\Sigma_{\mu}],$ $[V_{\mu}],$ and $[S_{\mu}(X)]$ can be computed from $[\widetilde{\Sigma}_{\mu}]$ in the same way as in the hypersurface case. 
\begin{ex}
    Let $X$ be a degree $(2,3)$ complete intersection in $\Pp^3$, which is a genus $4$ canonically embedded curve. Let $\mu=(2,1)$, so a $\mu$-incident line is a tangent line meeting the curve at another point. We have that $d_{\mathrm{CI}}(\mu)=0$. By Theorem \ref{complete intersections}, 
    \begin{align*}
          [{\Sigma}_{\mu}]&=\sum_{j=0}^3\sum_{i=0}^3\gamma_{1}^{3-i}\gamma_2^{3-j}g_{\mu}^{i,j},
    \end{align*}
    and 
   \begin{align*}
       [S_{\mu}(X)]&=g_{\mu}^{3,3}|_{c_i=0}.
   \end{align*}
By Proposition \ref{collorary}, we have
\begin{align*}
    f_{\mu}^{3,3}&=\kappa_{-1,1,1}(\kappa_{-1,2,2}+\kappa_{0,2,1}+\kappa_{0,1,2}+\kappa_{1,1,1})\\&-2\kappa_{-1,3,2}-2\kappa_{-1,2,3}-2\kappa_{0,3,1}-8\kappa_{0,2,2}\\&-2\kappa_{0,1,3}-6\kappa_{1,2,1}-6\kappa_{1,1,2}-4\kappa_{2,1,1}.
\end{align*}
Applying the pullback formula 
\[\alpha^*(\kappa_{i,j,k})=f_*((-2\zeta+\sigma_1)^{i+1}(d_1\zeta)^j(d_2\zeta)^k),\]
we obtain 
\[[S_{\mu}(X)]=24\sigma_{2,2}.\]
Indeed, since $C=Q\cap S\subseteq \Pp^3$ is a complete intersection of a quadric and a cubic, by B\'ezout's theorem, any $\mu$-incident line is contained in the quadric. Since $Q\cong \Pp^1\times \Pp^1,$ every line $L\subseteq Q$ belongs to one of its rulings. The $\mu$-incident lines are precisely the lines of ramified fibers of one of the projection maps $f_{i}:C\to \Pp^1.$ Under the identification $Q\cong \Pp^1\times \Pp^1,$ $C\subseteq \Pp^1\times \Pp^1$ has bidegree $(3,3)$, so $\deg f_i=3.$
By Riemann-Hurwitz,
\[2g_C-2=(\deg f_i)\left(2g_{\Pp_1}-2\right)+\deg R_i,\]
so $\deg R_i=12$. Adding the ramification divisors from projecting onto both rulings, we compute that there are $24=\deg S_{\mu}(X)$ $\mu$-incident lines.
\end{ex}
\printbibliography
\end{document}